\newtheorem{theorem}{Theorem}[section]
\newtheorem{lemma}{Lemma}[section]
\newtheorem{remark}{Remark}[section]
 \newtheorem{example}{Example}[section]
\begin{document}
 
\title{Convergence Analysis of An Alternating Nonlinear GMRES on Linear Systems}

\author{ Yunhui He\thanks{Department of Mathematics, University of Houston,  3551 Cullen Blvd, Houston, Texas 77204-3008, USA.(\tt{yhe43@central.uh.edu}).}}
\date{\today}
\maketitle

\begin{abstract}
In this work, we develop an alternating nonlinear Generalized Minimum Residual (NGMRES) algorithm with depth $m$ and periodicity $p$, denoted by aNGMRES($m, p$), applied to linear systems. We provide a theoretical analysis to quantify by how much one-step NGMRES($m$) using Richardson iterations as initial guesses can improve the convergence speed of the underlying fixed-point iteration for diagonalizable and symmetric positive definite cases. Our theoretical analysis gives us a better understanding of which factors affect the convergence speed. Moreover, under certain conditions, we prove the periodic equivalence between the proposed aNGMRES applied to Richardson iteration and GMRES. Specifically, aNGMRES($\infty,p$) and full GMRES are identical at the iteration index $jp$. Therefore, aNGMRES($\infty,p$) can be regarded as an alternative to GMRES for solving linear systems. For finite $m$, the iterates of aNGMRES($m,m+1$) and restarted GMRES (GMRES($m+1$)) are the same at the end of each periodic interval of length $p$, i.e, at the iteration index $jp$.  In Addition, we present a convergence analysis of aNGMRES when applied to accelerate Richardson iteration. The advantages of aNGMRES($m,p$) method are that there is no need to solve a least-squares problem at each iteration which can reduce the computational cost, and it can enhance the robustness against stagnations, which could occur for NGMRES($m$).
\end{abstract}

\vskip0.3cm {\bf Keywords.} Nonlinear GMRES, fixed-point iteration, acceleration, restarted GMRES

\section{Introduction}\label{sec:intro}
 	We consider the linear system
 	\begin{equation}\label{eq:linear-sys}
 		Au=b,
 	\end{equation}
 	where $A\in \mathbb{R}^{n\times n}$ is invertible, and $u, b\in\mathbb{R}^n$. Assume that $u^*$ is the exact solution. There are various iterative methods to solve \eqref{eq:linear-sys}, such as Jacobi iteration, Gauss-Seidel, successive over-relaxation, Krylov subspace methods, preconditioned iterations, and multigrid methods.  Acceleration methods, such as Chebyshev acceleration \cite{d2021acceleration},  Nesterov acceleration \cite{nesterov2013introductory}, Anderson-type acceleration \cite{anderson1965iterative,walker2011anderson,evans2020proof,chen2022composite,pollock2019anderson,anderson2019comments}, and nonlinear GMRES (NGMRES) \cite{washio1997krylov,oosterlee2000krylov}, are often used to speed up the convergence of iterative methods. There is increasing interest in understanding and evaluating how acceleration methods can enhance the underlying iterative methods and their applications.  In this work, we are interested in using NGMRES to accelerate fixed-point iterations applied to \eqref{eq:linear-sys}, and in analyzing its convergence properties as well as its connections to existing methods. NGMRES was originally proposed by Washio and Oosterlee \cite{washio1997krylov} to accelerate nonlinear multigrid methods.   We note that there are few studies on the development of NGMRES. In De Sterck's work \cite{sterck2013steepest}, steepest descent preconditioning for NGMRES was examined, which significantly accelerates the convergence of stand-alone steepest descent optimization. While NGMRES has demonstrated competitive performance compared to other acceleration
 	methods, its theoretical foundations, algorithmic improvements, and potential applications remain
 	underexplored---even in the content of linear problems. Our goal is to better understand NGMRES applied to linear systems, and to propose a new variant of NGMRES. 
 	
 	First, we provide a brief review of NGMRES.  NGMRES with a finite window size $m$ (i.e., using the previous $m +1$ iterates), denoted by NGMRES($m$), is preferred over full version because of its greater computational efficiency. In 1997,  Washio and Oosterlee \cite{washio1997krylov} proposed Krylov subspace acceleration analogous to GMRES for the nonlinear multigrid on the finest level. In 2000,  Oosterlee and Washio \cite{oosterlee2000krylov} proposed a parallel nonlinear Krylov acceleration strategy for solving nonlinear equations, which utilized multigrid and GMRES methods. Later, NGMRES \cite{sterck2012nonlinear,sterck2021asymptotic} is applied to accelerate the alternating least squares for canonical tensor decomposition problems.  However, there is little mathematical understanding of the theory or solution approaches for nonlinear problems with NGMRES. Analysis of the asymptotic convergence of NGMRES can provide a characterization of the behavior of NGMRES. There are fundamental gaps in our current understanding of how NGMRES works, and what the relationships between NGMRES, GMRES \cite{saad2003iterative}, and restarted GMRES (GMRES($m$)) \cite{saad1986gmres} are. The worst-case root-convergence of GMRES(1)  has been proposed \cite{He25worstcase}. In fact, NGMRES(0) is GMRES(1). Thus, the results of GMRES(1) \cite{He25worstcase} serve as an upper bound on the performance of NGMRES($m$) on linear systems. Recently, we built a connection between NGMRES and GMRES when applied to linear systems. Assume that the 2-norm of the residual of GMRES is strictly decreasing. We \cite{GreifHe25NGMRES} proved that the residuals of NGMRES($\infty$) are the same as these of GMRES, and  if $A$ is invertible, the iterates generated by the two methods are identical. Moreover,  for invertible matrices $A$ that are either symmetric or shifted skew-symmetric of the form $A=\alpha I + S$, where $\alpha \in \mathbb{R}$ and $S$ is skew-symmetric, we proved that NGMRES($m$) and GMRES are identical for $\forall m\geq 1$, suggesting that a large window size $m$ is unnecessary. Specially, when $A$ is symmetric and invertible, NGMRES(1) is equivalent to the conjugate residual method. These discoveries help us better understand NGMRES on linear problems. For nonlinear problems, we have provided convergence analysis for NGMRES($m$) for a specific class of contractive fixed-point operators\cite{He25NGMRES}. Specifically, we proved that for general $m > 0$, the residuals of NGMRES($m$) converge r-linearly under some conditions. For $m = 0$, we proved that the residuals of NGMRES(0) converge q-linearly.  In practice, we see NGMRES dramatically enhances the performance of linear and  nonlinear fixed-point iterations.  However, there is a lack of studies that quantitatively assess the improvements achieved by NGMRES compared to the underlying fixed-point iteration. One aim of this work is to quantify the improvement achieved by considering a single step of NGMRES  applied to linear systems using Richardson iterations.
 	
 	In this work, we first study how NGMRES can improve the performance of a fixed-point iteration by considering one-step NGMRES on linear problems, that is, given $u_0$, we compute $u_j=q(u_{j-1})$, where $j=1,2,\cdots, k$, and then generate $u_{k+1}$ by NGMRES($m$). Note that at each iteration of NGMRES($m$), one needs to solve a least-squares problem, which might be computational expensive for large $m$. Then, based on one-step analysis of NGMRES, we propose alternating NGMRES with depth $m$ and periodicity $p$, denoted by aNGMRES($m,p$). The main idea is that, given an initial guess $x_0$, we compute $p-1$   fixed point iterations, then switch to a single NGMRES($m$) iteration. This process is then repeated. We provide convergence analysis of aNGMRES, and make connections between aNGMRES, GMRES, and restarted GMRES. Additionally, we make comments on recently proposed alternating Anderson-Richardson method (AAR). Our proposed aNGMRES method offers a novel approach to solving large linear systems. It can be extended to accelerate nonlinear fixed-point iterations.

 	The remainder of this work is organized as follows. In section \ref{sec:NGMRES}, we introduce NGMRES and analyze the improvement of one- step NGMRES in two distinct cases: (1) $M=I-A$ is diagonalizable, and (2) when $M$ is symmetric and positive definite. In section \ref{sec:aNGMERS}, we propose aNGMRES, establish relationships with full GMRES and restarted GMRES, and provide convergence analysis of aNGMRES. In section \ref{sec:num}, we present some numerical examples to validate our theoretical findings. Additionally, we discuss the practical considerations of aNGMRES and present the performance of aNGMRES applied to both linear and nonlinear problem settings. We draw conclusions in section \ref{sec:con}.

 	\section{NGMRES}\label{sec:NGMRES}
 	NGMRES \cite{washio1997krylov, oosterlee2000krylov} was originally developed to accelerate nonlinear iterations. Here, we consider NGMRES applied to linear systems. Assume we have the following fixed-point iteration for \eqref{eq:linear-sys}
 	\begin{equation}\label{eq:FP}
 		u_{k+1}=q(u_k)=Mu_k+b,
 	\end{equation}
 	where $M=I-A$. Note that  \eqref{eq:FP} is the Richardson iteration.  Define the $k$th residual as
 	\begin{equation}\label{eq:rk-u-minus-q}
 		r_k=r(u_k)=u_k-q(u_k)=Au_k-b.
 	\end{equation}
 	In practice, the fixed-point iteration \eqref{eq:FP} converges slowly or even diverges. We consider NGMRES($m$)  to accelerate \eqref{eq:FP}. In each iteration,  a least-squares problem must be solved to determine the combination coefficients. We present the framework of NGMRES($m$) in Algorithm \ref{alg:NGMRESm}. Throughout this work,  $\|\cdot\|$ denotes the 2-norm ($\ell^2$) of a vector or a matrix.
 	
 	\begin{algorithm}
 		\caption{A windowed NGMRES with depth $m$: NGMRES($m$)} \label{alg:NGMRESm}
 		\begin{algorithmic}[1] 
 			\State Given $u_0$ and $m\geq 0$ 
 			\For {$k=0,1,2,\cdots$}
 			\begin{itemize}
 				\item set $m_k=\min\{k,m\}$;
 				\item compute the residual  $r_k=u_k-q(u_k)$; 
 				\item compute 
 				\begin{equation}\label{eq:ngukp1}
 					u_{k+1}=q(u_k)+\sum_{i=0}^{m_k} \beta^{(k)}_i\left(q(u_k)- u_{k-i})\right),
 				\end{equation}
 				where ${\bm\beta^{(k)}}=[\beta_0^{(k)},\beta_1^{(k)},\cdots, \beta_{m_k}^{(k)}]^T$ is obtained by solving the following least-squares problem
 				\begin{equation}\label{eq:NGMRES-LSQ}
 					\min_{\bm \beta^{(k)}}\|r(q(u_k))+\sum_{i=0}^{m_k} \beta^{(k)}_i\left(r(q(u_k))-r(u_{k-i})\right)\|^2.
 				\end{equation}
 			\end{itemize}
 			\EndFor
 		\end{algorithmic}
 	\end{algorithm}
 	
 	\begin{remark}
 			We note that, when applying NGMRES to solve the general equation $g(u)=0$, the corresponding least-squares problem in Algorithm \ref{alg:NGMRESm} takes the form\cite{GreifHe25NGMRES}
 			\begin{equation*} 
 				\min_{\bm \beta^{(k)}}\|g(q(u_k))+\sum_{i=0}^{m_k} \beta^{(k)}_i\left(g(q(u_k))-g(u_{k-i})\right)\|^2.
 			\end{equation*}
 			In this work, we focus on solving $Au=b$, which can be reformulated as $g(u)=Au-b=0$, using the fixed-point iteration defined in \eqref{eq:FP}. From \eqref{eq:rk-u-minus-q}, we observe that $r(u_k)=g(u_k)$, which justifies our use of the least-squares formulation in \eqref{eq:NGMRES-LSQ} throughout this work.
 		\end{remark}

The work of Greif and He\cite{GreifHe25NGMRES} shows that the least-squares problem \eqref{eq:NGMRES-LSQ} is minimizing the $(k+1)$th residual. For completeness, we restate their key results as a lemma below.
 	\begin{lemma}\cite{GreifHe25NGMRES} \label{lem:NGMRES-mini-resi}
 		Considering NGMRES($m$) in  Algorithm \ref{alg:NGMRESm} for $q(u)=Mu+b$, we have
 		\begin{equation}\label{eq:NG-minrk}
 			\|r_{k+1}\|=\|Au_{k+1}-b\|=	\min_{\bm\beta^{(k)}}\|r(q(u_k))+\sum_{i=0}^{m_k} \beta^{(k)}_i\left(r(q(u_k))-r(u_{k-i})\right)\|^2,
 		\end{equation}
 		and
 		\begin{equation*}\label{eq:NGMRES-nonincre}
 			\|r_{k+1}\|\leq \|r_k\|.
 		\end{equation*}
 	\end{lemma}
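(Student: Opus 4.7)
The plan rests on a single observation: since $A$ is linear and $q$ is affine, the residual map $r(u) = Au - b$ is itself affine, and in particular commutes with affine combinations in the sense that for any points $v_0, v_1, \ldots, v_{m_k}$ and scalars $\gamma_0, \gamma_1, \ldots, \gamma_{m_k}$, we have $r(v_0 + \sum_i \gamma_i(v_0 - v_i)) = r(v_0) + \sum_i \gamma_i (r(v_0) - r(v_i))$. I would apply this identity with $v_0 = q(u_k)$ and $v_i = u_{k-i}$ to the NGMRES update
\begin{equation*}
u_{k+1} = q(u_k) + \sum_{i=0}^{m_k} \beta_i^{(k)}\bigl(q(u_k) - u_{k-i}\bigr).
\end{equation*}
This directly rewrites $r_{k+1} = r(u_{k+1}) = Au_{k+1} - b$ as exactly the vector whose norm is being minimized in \eqref{eq:NGMRES-LSQ}. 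Since the $\beta_i^{(k)}$ are defined to be the minimizers of that norm, taking norms on both sides yields \eqref{eq:NG-minrk}.

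For the monotonicity bound \eqref{eq:NGMRES-nonincre}, the strategy is to exhibit a single admissible choice of coefficients in the least-squares problem that already gives residual norm $\|r_k\|$; the minimum value must then be no larger. The natural candidate is $\beta_0^{(k)} = -1$ and $\beta_i^{(k)} = 0$ for $i \geq 1$, because the $i=0$ term in the sum is $q(u_k) - u_k$, which precisely cancels the leading $q(u_k)$ term and sends the update back to $u_k$. Under this choice the objective inside \eqref{eq:NGMRES-LSQ} collapses to $r(q(u_k)) - (r(q(u_k)) - r(u_k)) = r_k$, so the minimum is at most $\|r_k\|^2$, and combining with \eqref{eq:NG-minrk} gives \eqref{eq:NGMRES-nonincre}.

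There is no real obstacle here; the only subtlety is making sure the affine-combination identity is invoked cleanly (the coefficients $\beta_i^{(k)}$ are not required to sum to anything specific, but the coefficient of $v_0 = q(u_k)$ in the update is $1 + \sum_i \beta_i^{(k)}$, which is matched by the coefficient of $r(q(u_k))$ in the residual expression, so the algebra goes through without needing any partition-of-unity assumption). The proof is thus a short two-step argument: an affine substitution for the equality, and exhibition of a single feasible point for the inequality.
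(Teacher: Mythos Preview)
Your argument is correct and is the natural route: the affine identity $r\bigl(v_0+\sum_i\gamma_i(v_0-v_i)\bigr)=r(v_0)+\sum_i\gamma_i\bigl(r(v_0)-r(v_i)\bigr)$ applied to the NGMRES update immediately identifies $r_{k+1}$ with the least-squares objective, and the feasible choice $\beta_0^{(k)}=-1$, $\beta_i^{(k)}=0$ for $i\ge1$ gives the monotonicity bound. Note, however, that the paper does not supply its own proof of this lemma; it is quoted from \cite{GreifHe25NGMRES}, so there is no in-paper argument to compare against. Your proof is exactly the standard one and is what one would expect to find in the cited reference.
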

 	We omit the proof and refer the reader to the work of Greif and He\cite{GreifHe25NGMRES} for further details.  Since the 2-norm of the residuals of NGMRES($m$) is nonincreasing, NGMRES($m$) always converges. 
 	Let $\gamma_j^{(k)}=\sum_{i=j}^{m_k}\beta_i^{(k)}$ and $\bm{\gamma}^{(k)}=[\gamma_{m_k}^{(k)}, \gamma_{m_k-1}^{(k)},\cdots,\gamma_{0}^{(k)}]^T$.  For the convenience of convergence analysis, we rewrite Algorithm \ref{alg:NGMRESm} as Algorithm  \ref{alg:NGMRESm-simp}.

 	\begin{algorithm}[h]
 		\caption{Alternative format of windowed NGMRES with depth $m$: NGMRES($m$)} \label{alg:NGMRESm-simp}
 		\begin{algorithmic}[1] 
 			\State Given $u_0$ and $m\geq 0$ 
 			\For {$k=0,1,2,\cdots$}
 			\begin{itemize}
 				\item set $m_k=\min\{k,m\}$;
 				\item compute the residual  $r_k=u_k-q(u_k)$; 
 				\item compute 
 				\begin{equation}\label{NGMRES-update}
 					u_{k+1}=q(u_k)+\gamma_0^{(k)}(q(u_k)-u_k) + \sum_{i=1}^{m_k} \gamma^{(k)}_i\left(u_{k-i+1}- u_{k-i}\right),
 				\end{equation}
 				where  $\gamma_i^{(k)}$ is obtained by solving the following least-squares problem
 				\begin{equation}\label{eq:NGMRES-LSQ-simp}
 					\min_{\bm\gamma^{(k)}}\|r(q(u_k))+\gamma_0^{(k)}(r(q(u_k))-r(u_k)) + \sum_{i=1}^{m_k} \gamma^{(k)}_i\left(r(u_{k-i+1})- r(u_{k-i})\right)\|^2.
 				\end{equation}
 			\end{itemize}
 			\EndFor
 		\end{algorithmic}
 	\end{algorithm}
 	
 To carry out the convergence analysis of NGMRES, our first objective is to establish the relationship between consecutive residuals or errors. Now we define the following matrices with dimensions $n\times (m_k+1)$, 
 	\begin{equation*}
 		\mathcal{S}_k=
 		\begin{bmatrix}
 			u_{k-m_k+1}-u_{k-m_k},& u_{k-m_k+2}-u_{k-m_k+1}, &\cdots & u_k-u_{k-1},&q(u_k)-u_k
 		\end{bmatrix},
 	\end{equation*}
 	and 
 	\begin{equation*}
 		\mathcal{D}_k=
 		\begin{bmatrix}
 			r(u_{k-m_k+1})-r(u_{k-m_k}),& r(u_{k-m_k+2})-r(u_{k-m_k+1}),& \cdots & r(u_k)-r(u_{k-1}), &r(q(u_k))-r(u_k)
 		\end{bmatrix}.
 	\end{equation*}
 	Then, the least-squares problem  \eqref{eq:NGMRES-LSQ-simp} can be rewritten as
 	\begin{equation}\label{eq:Dk} 	
 		\min_{\bm{\gamma}^{(k)}}\|r(q(u_k))+\mathcal{D}_k\bm{\gamma}^{(k)}\|^2_2,
 	\end{equation}
 	which gives
 	\begin{equation}\label{eq:Dk-normeq} 
 		\bm{\gamma}^{(k)}=- (\mathcal{D}_k^T  \mathcal{D}_k)^{-1}\mathcal{D}_k^T r(q(u_k)).
 	\end{equation}
 	Define $e_k=u_k-u^*$. It follows that $r_k=Ae_k$. Moreover,
 	\begin{align*}
 		r(q(u_k))&=Aq(u_k)-b=A(Mu_k+b)-b=Mr(u_k)=MAe_k,\\
 		r(q(u_k))-r(u_k)&=Aq(u_k)-b-Au_k+b=A(q(u_k)-u_k)=-Ar(u_k)=-A^2e_k,\\
 		r(u_k)-r(u_{k-1})&=Au_k-b-Au_{k-1}+b=A(e_k-e_{k-1}).
 	\end{align*}
 	Based on the above formulas, we rewrite $\mathcal{S}_k$ and $\mathcal{D}_k$ as  
 	\begin{equation*}
 		\mathcal{S}_k=
 		\begin{bmatrix}
 			e_{k-m_k+1}-e_{k-m_k}& 	e_{k-m_k+2}-e_{k-m_k+1}, & \cdots,& e_k-e_{k-1},&	-Ae_k
 		\end{bmatrix},
 	\end{equation*}
 	and 
 	\begin{equation*}
 		\mathcal{D}_k=A
 		\begin{bmatrix}
 			e_{k-m_k+1}-e_{k-m_k}& 	e_{k-m_k+2}-e_{k-m_k+1}, & \cdots,& e_k-e_{k-1},&	-Ae_k
 		\end{bmatrix}=A\mathcal{S}_k.
 	\end{equation*}
 	Then, the update \eqref{NGMRES-update} of NGMRES($m$) is given by
 	\begin{align*}
 		u_{k+1}&=q(u_k)+\gamma_0^{(k)}(q(u_k)-u_k) + \sum_{i=1}^{m_k} \gamma^{(k)}_i\left(u_{k-i+1}- u_{k-i}\right)\\
 		&=u_k-r(u_k) +\mathcal{S}_k {\bm \gamma}^{(k)}\\
 		&=u_k-r(u_k)-\mathcal{S}_k (\mathcal{S}_k^T A^T A\mathcal{S}_k)^{-1}\mathcal{S}_k^TA^TM r(u_k)\\
 		&=u_k-(I + \mathcal{S}_k (\mathcal{S}_k^T A^T A\mathcal{S}_k)^{-1}\mathcal{S}_k^TA^TM) r(u_k),
 	\end{align*} 
 	which can be treated as a multisecant method. It follows that
 	\begin{equation*}
 		e_{k+1}=e_k-(I + \mathcal{S}_k (\mathcal{S}_k^T A^T A\mathcal{S}_k)^{-1}\mathcal{S}_k^TA^TM)Ae_k.
 	\end{equation*}
 	Let $W_k=\mathcal{S}_k (\mathcal{S}_k^T A^T A\mathcal{S}_k)^{-1}\mathcal{S}_k^TA^T$. Then,
 	\begin{align*}
 		&I-(I + \mathcal{S}_k (\mathcal{S}_k^T A^T A\mathcal{S}_k)^{-1}\mathcal{S}_k^TA^TM)A\\
 		=&I- (I+W_kM)(I-M) \\
 		=&(I-W_kA)M.
 	\end{align*}
 	Thus,
 	\begin{equation}\label{eq:ekplusone}
 		e_{k+1}=(I-W_kA)Me_k, \quad  r_{k+1}= (I-AW_k)Mr_k.
 	\end{equation}
 	From the above equalities, we see that the $(k+1)$th error or residual is related to the previous $m_k+1$ iterates, which makes the convergence analysis complicated. To better understand the performance of NGMRES, we will consider special $m_k+1$ iterates to quantitatively assess a single NGMRES($m$) in the next subsection.

 	\subsection{Improvement of NGMRES}
 	Here, we study how NGMRES($m$) can improve the performance of the fixed-point iteration given by \eqref{eq:FP}. In order to do this, we consider one-step NGMRES, that is, given $u_0$, we compute $u_j=q(u_{j-1})$, where $j=1,2,\cdots, k$, and then generate $u_{k+1}$ by NGMRES($m$). We compare $u_{k+1}$ with $q(u_k)$ which is the fixed-point iterate. 
 	
 	For later use, we first list the Chebyshev polynomial property as follows. We denote by $P_s$ the set of all polynomial with degree at most $s$. 
 	
 	\begin{lemma}\cite{saad2003iterative}\label{lem:chebyshe-property}
 		Let $[\alpha,\beta]$ be a non-empty interval in $\mathbb{R}$, which does not include $0$ and 1, and let $p_s(t)$ be a polynomial with degree at most $s$. Then,
 		\begin{equation*}
 			\min_{p_s\in P_s, p_s(\gamma)=1} \max_{t\in [\alpha,\beta]}|p_s(t)| =\frac{1}{\big|C_s(1+2\frac{\gamma-\beta}{\beta-\alpha})\big|},
 		\end{equation*}
 		where $C_s$  is the Chebyshev polynomial of degree $s$ of the first kind.
 	\end{lemma}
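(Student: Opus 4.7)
The plan is to reduce the minimax problem on the interval $[\alpha,\beta]$ to the classical Chebyshev minimax problem on $[-1,1]$ via an affine change of variables, and then to invoke the extremal property of the Chebyshev polynomials of the first kind. First, I would introduce the affine bijection $\phi:[\alpha,\beta]\to[-1,1]$ defined by $\phi(t)=\frac{2t-\alpha-\beta}{\beta-\alpha}=1+\frac{2(t-\beta)}{\beta-\alpha}$, and set $\tilde\gamma:=\phi(\gamma)$. For every $p_s\in P_s$, the pullback $\tilde p_s(x):=p_s(\phi^{-1}(x))$ is again a polynomial of degree at most $s$, the map $p_s\mapsto\tilde p_s$ is a bijection of $P_s$, and clearly $\max_{t\in[\alpha,\beta]}|p_s(t)|=\max_{x\in[-1,1]}|\tilde p_s(x)|$. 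The constraint $p_s(\gamma)=1$ becomes $\tilde p_s(\tilde\gamma)=1$, and the hypothesis that $\gamma\notin[\alpha,\beta]$ guarantees $\tilde\gamma\notin[-1,1]$ so that $|C_s(\tilde\gamma)|>1$, keeping the denominator in the formula well defined. The lemma therefore reduces to
\begin{equation*}
\min\bigl\{\max_{x\in[-1,1]}|\tilde p_s(x)|:\tilde p_s\in P_s,\ \tilde p_s(\tilde\gamma)=1\bigr\}=\frac{1}{|C_s(\tilde\gamma)|}.
\end{equation*}

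Next, I would verify that this bound is attained by the candidate $\tilde p_s^\star(x):=C_s(x)/C_s(\tilde\gamma)$, which satisfies the normalization at $\tilde\gamma$ and, using $\max_{x\in[-1,1]}|C_s(x)|=1$, has sup-norm exactly $1/|C_s(\tilde\gamma)|$ on $[-1,1]$. For optimality, I would argue by contradiction via the equioscillation property: $C_s$ attains the values $\pm 1$ at the $s+1$ extrema $x_i=\cos(i\pi/s)$ for $i=0,\ldots,s$ with strictly alternating signs. If some $q\in P_s$ satisfied $q(\tilde\gamma)=1$ together with $\max_{x\in[-1,1]}|q(x)|<1/|C_s(\tilde\gamma)|$, then at each $x_i$ the difference $\tilde p_s^\star-q$ would inherit the sign of $\tilde p_s^\star(x_i)$, producing $s$ sign alternations and therefore at least $s$ distinct zeros in $[-1,1]$. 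Combined with the additional zero at $\tilde\gamma\notin[-1,1]$ guaranteed by $(\tilde p_s^\star-q)(\tilde\gamma)=0$, this forces $s+1$ distinct zeros of a polynomial of degree at most $s$, so $\tilde p_s^\star\equiv q$, contradicting the strict inequality.

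The main obstacle will be making the sign-change step watertight: one must ensure that the strict inequality $|q(x_i)|<|\tilde p_s^\star(x_i)|$ really delivers genuine sign alternations of $\tilde p_s^\star-q$ between every pair of consecutive extrema (endpoints included), so that the intermediate value theorem yields the claimed $s$ interior zeros rather than mere tangencies that could be absorbed into multiplicities. The standard fix is to note that strict inequality at each $x_i$ pins down the sign of $\tilde p_s^\star(x_i)-q(x_i)$ exactly, producing $s$ sign changes and hence $s$ simple zeros in the open interval $(-1,1)$. Once this combinatorial sign-counting argument is secured, the rest is a routine assembly of the affine reduction and the extremal property, and the formula claimed in the lemma follows.
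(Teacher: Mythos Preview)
The paper does not supply its own proof of this lemma; it is quoted verbatim from Saad's textbook via the citation \cite{saad2003iterative}, so there is nothing in the paper to compare against. Your proposal is the standard and correct argument: an affine change of variables reduces the problem to $[-1,1]$, the candidate $C_s(x)/C_s(\tilde\gamma)$ attains the value $1/|C_s(\tilde\gamma)|$, and the equioscillation/sign-counting contradiction shows no competitor can do strictly better. One small remark: the lemma as stated in the paper is slightly sloppy---$\gamma$ is never introduced, and the exclusion of $0$ from $[\alpha,\beta]$ is irrelevant to the minimax identity itself (it matters only for the application that follows). You correctly identified that the essential hypothesis is $\gamma\notin[\alpha,\beta]$, which is what guarantees $\tilde\gamma\notin[-1,1]$ and hence both $C_s(\tilde\gamma)\neq 0$ and the extra zero at $\tilde\gamma$ being distinct from the $s$ zeros in $(-1,1)$.
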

 	Let $a_1<a_2, \alpha=\frac{1}{a_2}$ and $\beta=\frac{1}{a_1}$ and $[\alpha,\beta]$ does not include $0$ and 1. Denote
 	\begin{equation*}
 		\epsilon(a_1,a_2,s)=	\min_{p_s\in P_s, p_s(1)=1} \max_{t\in [\alpha,\beta]}|p_s(t)| =\frac{1}{\big|C_s(1+2\frac{1-\beta}{\beta-\alpha})\big|}.
 	\end{equation*}
 We begin by examining the improvement under the assumption that $M$ is diagonalizable, followed by an analysis of the case where $M$ is symmetric and positive definite.
 	\subsubsection{Diagonalizable case}
 	We first consider the case that $M$ is diagonalizable, i.e., $M=U\Lambda U^{-1}$, where $\Lambda$ is a diagonal matrix. Assume that $u_j=Mu_{j-1}+b$ for $j=1,\cdots, k$. Then,  $e_j=M^je_0$ and $e_j-e_{j-1}=M^{j-1}(M-I)e_0=U\Lambda^{j-1}(\Lambda-I)U^{-1}e_0$.

 	Define $\mathcal{F}_{m_k+1}$ as the Krylov subspace matrix given by
 	\begin{equation*}
 		\mathcal{F}_{m_k+1}(\Lambda, U^{-1}e_0)=[ U^{-1}e_0, \Lambda U^{-1}e_0, \cdots, \Lambda^{m_k-1} U^{-1}e_0,  \Lambda^{m_k}U^{-1}e_0].
 	\end{equation*}
 	Then,
 	\begin{align*}
 		\mathcal{S}_k &=
 		\begin{bmatrix}
 			e_{k-m_k+1}-e_{k-m_k},& \cdots, &e_k-e_{k-1}, &-Ae_k 
 		\end{bmatrix}\\
 		&=[M^{k-m_k}(M-I)e_0,\cdots , M^{k-1}(M-I)e_0, -(I-M)M^ke_0]\\
 		&=-(I-M)M^{k-m_k}[e_0, Me_0, \cdots,  M^{m_k-1} e_0,  M^{m_k}e_0]\\
 		&=U(\Lambda-I)\Lambda^{k-m_k} [ U^{-1}e_0, \Lambda U^{-1}e_0, \cdots, \Lambda^{m_k-1} U^{-1}e_0,  \Lambda^{m_k}U^{-1}e_0]\\
 		&=U(\Lambda-I)\Lambda^{k-m_k} \mathcal{F}_{m_k+1}(\Lambda, U^{-1}e_0).
 	\end{align*}
 	It follows that
 	\begin{align*}
 		\mathcal{D}_k= A\mathcal{S}_k& = (I-M)U(\Lambda-I)\Lambda^{k-m_k} \mathcal{F}_{m_k+1}(\Lambda, U^{-1}e_0) \\
 		&=  -U(\Lambda-I)^2\Lambda^{k-m_k} \mathcal{F}_{m_k+1}(\Lambda, U^{-1}e_0)\\
 		&= -U\mathcal{F}_{m_k+1}(\Lambda, (\Lambda-I)^2\Lambda^{k-m_k} U^{-1}e_0)\\
 		&=-U\mathcal{H},
 	\end{align*}
 	where 
 	\begin{equation} \label{eq:def-H}
 		\mathcal{H} =\mathcal{F}_{m_k+1}(\Lambda, (\Lambda-I)^2\Lambda^{k-m_k} U^{-1}e_0)=\mathcal{F}_{m_k+1}(\Lambda, EU^{-1}e_0)
 	\end{equation}
 	with $E=(\Lambda-I)^2\Lambda^{k-m_k}$.
 	
 	By standard calculation, we have
 	\begin{align}
 		I-W_kA & = I-\mathcal{S}_k (\mathcal{S}_k^T A^T A\mathcal{S}_k)^{-1}\mathcal{S}_k^TA^T A\nonumber\\
 		& = I-A^{-1}A\mathcal{S}_k (\mathcal{S}_k^T A^T A\mathcal{S}_k)^{-1}\mathcal{S}_k^TA^T A\nonumber\\
 		&=A^{-1}(I- U\mathcal{H}(\mathcal{H}^TU^TU\mathcal{H})^{-1}\mathcal{H}^TU^T )A \nonumber\\
 		&=U(I-\Lambda)^{-1}U^{-1}(I- U\mathcal{H}(\mathcal{H}^TU^TU\mathcal{H})^{-1}\mathcal{H}^TU^T )U(I-\Lambda)U^{-1}\nonumber\\
 		&=U G_k U^{-1}, \label{eq:WkA}
 	\end{align}
 	where
 	\begin{equation}\label{eq:Gk-diag}
 		G_k= \Theta(I-\mathcal{H}(\mathcal{H}^TU^TU\mathcal{H})^{-1}\mathcal{H}^T U^T U)\Theta^{-1}, \quad \Theta=(I-\Lambda)^{-1}.
 	\end{equation}
 	
 	\begin{lemma}\label{lem:diag}
 		Assume that $M$ is diagonalizable, ie., $M=U\Lambda U^{-1}$, where $\Lambda$ is a diagonal matrix.  Then, the error $e_{k+1}$ obtained from one-step NGMRES($m$) satisfies
 		\begin{equation*}
 			e_{k+1}=U G_k U^{-1} Me_k,
 		\end{equation*}
 		where $G_k$ is defined in \eqref{eq:Gk-diag},  and the residual satisfies	
 		\begin{equation*}
 			r_{k+1}=R_k Mr_k,
 		\end{equation*}
 		where $R_k=  I-U\mathcal{H}(\mathcal{H}^TU^TU\mathcal{H})^{-1}\mathcal{H}^T U^T$.
 	\end{lemma}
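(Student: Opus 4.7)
The plan is to assemble the two identities directly from the chain of substitutions already carried out in the text, using \eqref{eq:ekplusone} as the starting point and the one-step assumption $u_j = q(u_{j-1})$, $j = 1,\dots,k$, as the structural hypothesis that rewrites $\mathcal{S}_k$ and $\mathcal{D}_k$ in Krylov form. Because the previous iterates come from the pure fixed-point recursion, $e_j = M^j e_0$, and one obtains the factorizations
\[
  \mathcal{S}_k = U(\Lambda-I)\Lambda^{k-m_k}\mathcal{F}_{m_k+1}(\Lambda, U^{-1}e_0), \qquad
  \mathcal{D}_k = A\mathcal{S}_k = -U\mathcal{H},
\]
where $\mathcal{H}$ is the Krylov matrix defined in \eqref{eq:def-H}. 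These are the crucial algebraic reductions; everything else is a matter of plugging in.

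For the residual identity I would proceed first, since it is the cleaner of the two. From the definition $W_k = \mathcal{S}_k(\mathcal{S}_k^T A^T A \mathcal{S}_k)^{-1}\mathcal{S}_k^T A^T$, I note that $\mathcal{S}_k^T A^T A \mathcal{S}_k = \mathcal{D}_k^T\mathcal{D}_k = \mathcal{H}^T U^T U \mathcal{H}$, and hence
\[
  AW_k = (A\mathcal{S}_k)(\mathcal{D}_k^T\mathcal{D}_k)^{-1}(A\mathcal{S}_k)^T = U\mathcal{H}(\mathcal{H}^T U^T U \mathcal{H})^{-1}\mathcal{H}^T U^T.
\]
Subtracting from $I$ produces exactly $R_k$, and substituting into the second identity of \eqref{eq:ekplusone} yields $r_{k+1} = R_k M r_k$.

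For the error identity, the computation has essentially been carried out already in \eqref{eq:WkA}: writing $A = U(I-\Lambda)U^{-1}$ and $A^{-1} = U\Theta U^{-1}$ allows one to conjugate the bracketed oblique projector through $U$, leaving behind $\Theta$ on the left and $\Theta^{-1}$ on the right, which matches the definition of $G_k$ in \eqref{eq:Gk-diag}. Combined with $e_{k+1} = (I - W_k A) M e_k$ from \eqref{eq:ekplusone}, this gives $e_{k+1} = UG_kU^{-1} M e_k$.

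The only nontrivial obstacle is ensuring that the formulas are well-defined: one must check that $\mathcal{H}^T U^T U \mathcal{H}$ is invertible, which requires $\mathcal{H}$ to have full column rank $m_k+1$, i.e., that the Krylov vectors $\{\Lambda^i E U^{-1}e_0\}_{i=0}^{m_k}$ are linearly independent. In the generic (non-stagnation) case this holds, and $\Theta = (I-\Lambda)^{-1}$ is well-defined because $A = I-M$ is invertible by assumption, so $1 \notin \sigma(\Lambda)$. I would state this as a standing non-degeneracy hypothesis for the lemma rather than attempting to treat the rank-deficient Krylov case, which would only require replacing the inverse by a pseudo-inverse without changing the conclusion.
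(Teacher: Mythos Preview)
Your proposal is correct and follows essentially the same route as the paper: the paper's proof is simply ``Using \eqref{eq:ekplusone}, \eqref{eq:WkA} and \eqref{eq:Gk-diag} gives the desired results,'' and your argument just unpacks those same substitutions, with the minor addition of computing $I-AW_k$ directly for the residual identity rather than deriving it from the error identity. Your remark on the non-degeneracy hypothesis (full column rank of $\mathcal{H}$ and invertibility of $\Theta$) is a useful clarification that the paper leaves implicit.
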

 	
 	\begin{proof}
 		Using \eqref{eq:ekplusone}, \eqref{eq:WkA} and \eqref{eq:Gk-diag} gives the desired results.
 	\end{proof}
 	Based on the above analysis, we next present an error estimate for the residual of one-step NGMRES($m$) under the assumption that the eigenvalues of $M$ lie within a specified interval.
 	\begin{theorem}\label{thm:diag}
 		Assume that $M$ is a diagonalizable matrix, i.e., $M=U\Lambda U^{-1}$, where $\Lambda$ is a diagonal matrix, the eigenvalues of $M$ are contained in an interval $[a_1,a_2]$ that does not include $0$ and 1, and $m\leq k$. Then, the 2-norm of the residual $r_{k+1}$ obtained from one-step NGMRES($m$) satisfies
 		\begin{equation}\label{eq:error-est-M- diagonalizable}
 			\|r_{k+1} \|\leq \epsilon(a_1,a_2,m+1)\cdot \kappa_2(U)\cdot\| Mr_k\|.
 		\end{equation}
 		where $\kappa_2(U)=\|U\| \|U^{-1}\|$.
 	\end{theorem}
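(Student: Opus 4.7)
The plan is to exploit the observation from Lemma~\ref{lem:diag} that $R_k$ is the orthogonal projector onto the orthogonal complement of $\mathrm{range}(U\mathcal{H})$; indeed $U\mathcal{H}(\mathcal{H}^T U^T U \mathcal{H})^{-1}\mathcal{H}^T U^T$ is the standard orthogonal projection formula onto $\mathrm{range}(U\mathcal{H})$. This yields
\begin{equation*}
\|r_{k+1}\| = \min_{y \in \mathrm{range}(U\mathcal{H})} \|M r_k - y\|,
\end{equation*}
so any judicious choice of $y$ in $\mathrm{range}(U\mathcal{H})$ supplies an upper bound. Because the columns of $\mathcal{H}$ are $\Lambda^j E U^{-1} e_0$ for $j = 0, \ldots, m_k$, every admissible $y$ takes the form $y = U p(\Lambda) E U^{-1} e_0$ for some polynomial $p$ of degree at most $m_k$.

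The central step is an algebraic factorization of $M r_k - y$. From $e_k = M^k e_0$ and $r_k = A e_k$, I compute $M r_k = U \Lambda^{k+1}(I - \Lambda) U^{-1} e_0$. Since every matrix involved is diagonalized by $U$ and the relevant diagonal matrices commute, substituting $E = (\Lambda - I)^2 \Lambda^{k - m_k}$ and pulling out $\Lambda^{k - m_k}(I - \Lambda)$ on the left gives
\begin{equation*}
M r_k - y = U \, \Lambda^{k - m_k}(I - \Lambda)\bigl[\Lambda^{m_k + 1} - (I - \Lambda) p(\Lambda)\bigr] U^{-1} e_0.
\end{equation*}
Setting $q(t) = t^{m_k + 1} - (1 - t) p(t)$ produces a bijection between polynomials $p$ of degree at most $m_k$ and polynomials $q$ of degree at most $m_k + 1$ satisfying $q(1) = 1$: the condition $q(1) = 1$ is precisely what ensures $(1 - t) \mid t^{m_k + 1} - q(t)$, so a valid $p$ can be recovered. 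Using that $\Lambda$ is invertible (since $0 \notin [a,b]$) and that $U^{-1} M r_k = \Lambda^{k+1}(I - \Lambda) U^{-1} e_0$, the identity rewrites as
\begin{equation*}
M r_k - y = U \, q(\Lambda) \, \Lambda^{-(m_k + 1)} \, U^{-1} M r_k.
\end{equation*}

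Taking 2-norms yields $\|r_{k+1}\| \leq \kappa_2(U) \cdot \|q(\Lambda)\Lambda^{-(m_k + 1)}\| \cdot \|M r_k\|$. The middle factor is diagonal, so its 2-norm equals $\max_i |q(\lambda_i)|/|\lambda_i|^{m_k + 1}$, bounded by $\max_{t \in [a,b]} |q(t)|/|t|^{m_k + 1}$. The reciprocal change of variable $s = 1/t$ together with $\tilde{p}(s) := s^{m_k + 1} q(1/s)$ converts this to $\max_{s \in [1/b, 1/a]} |\tilde{p}(s)|$, where $\tilde{p}$ is a polynomial of degree at most $m_k + 1$ with $\tilde{p}(1) = 1$. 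Minimizing over admissible $p$ (equivalently over $\tilde{p}$) and invoking Lemma~\ref{lem:chebyshe-property} produce the Chebyshev bound $\epsilon(a, b, m_k + 1)$; since $m \leq k$ forces $m_k = m$, the stated inequality follows. I expect the main obstacle to be the polynomial identification: choosing $q(t) = t^{m_k+1} - (1-t)p(t)$ so that the unconstrained optimization over $p$ becomes the constrained Chebyshev optimization, and recognizing that the companion factor $\Lambda^{-(m_k + 1)}$ emerging from the factorization is exactly what aligns the reciprocal change of variable with the interval $[1/b, 1/a]$ in Lemma~\ref{lem:chebyshe-property}; the remaining manipulations are routine norm estimates.
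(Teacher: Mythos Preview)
Your proposal is correct and follows essentially the same approach as the paper: both use Lemma~\ref{lem:diag} to express $\|r_{k+1}\|$ as a minimum over vectors in $\mathrm{range}(U\mathcal{H})$, rewrite the resulting expression in terms of $U^{-1}Mr_k$, bound by $\kappa_2(U)$ times a diagonal-matrix norm, and then perform the reciprocal change of variable $t\mapsto 1/t$ to land on the Chebyshev problem of Lemma~\ref{lem:chebyshe-property}. The only cosmetic difference is that you introduce the intermediate polynomial $q(t)=t^{m_k+1}-(1-t)p(t)$ (and explicitly verify the bijection with polynomials satisfying $q(1)=1$) before passing to $\tilde{p}(s)=s^{m_k+1}q(1/s)$, whereas the paper carries the factor $I-p_m(\Lambda)(I-\Lambda)\Lambda^{-m-1}$ through the change of variable directly and identifies the constrained polynomial only at the final step.
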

 	
 	Note that if the $(k+1)$th iterate is obtained by the fixed-point iteration, then the corresponding residual is $Mr_k$.  Our theoretical result \eqref{eq:error-est-M- diagonalizable} provides a quantitative measurement of how much the fixed-point iteration can be improved. In fact, the factor $\epsilon(a_1,a_2,m+1)\cdot \kappa_2(U)$ is the gain in how much one-step NGMRES($m$) improves the convergence speed of the fixed-point iteration. 
 	
 	\begin{proof}
 		Since $m\leq k$, $m_k=\min\{m, k\}=m$. From Lemma \ref{lem:diag} and the definition of $\mathcal{H}$ in \eqref{eq:def-H}, we have
 		\begin{align*}
 			\|r_{k+1} \| &=\|(I-U\mathcal{H}(\mathcal{H}^TU^T U\mathcal{H})^{-1}\mathcal{H}^TU^T)Mr_k \| \\
 			& =\min_{c \in \mathbb{R}^{m+1}}\|Mr_k -U\mathcal{H}c\|\\
 			& =\min_{p_m \in P_m}\|Mr_k -p_m(M)UEU^{-1}e_0 \|,
 		\end{align*}
 		where $p_m$ is a polynomial with degree at most $m$.  
 		
 		Note that $U^{-1}Me_k=U^{-1}M^{k+1}e_0=\Lambda^{k+1}U^{-1}e_0$. Thus,
 		\begin{align*}
 			UEU^{-1}e_0&= U(\Lambda-I)^2\Lambda^{k-m}U^{-1}e_0\\
 			&=U(I-\Lambda)\Lambda^{-m-1}(I-\Lambda)\Lambda^{k+1}U^{-1}e_0\\
 			&= U(I-\Lambda)\Lambda^{-m-1}(I-\Lambda) U^{-1}Me_k\\
 			&= U(I-\Lambda)\Lambda^{-m-1}U^{-1}U(I-\Lambda) U^{-1}Me_k\\
 			&= U(I-\Lambda)\Lambda^{-m-1}U^{-1}Mr_k.
 		\end{align*}
 		It follows that
 		\begin{align*}
 			\|r_{k+1}\| &=\min_{p_m \in P_m}\|\left(I- p_m(U\Lambda U^{-1}) U(I-\Lambda)\Lambda^{-m-1}U^{-1} \right)Mr_k\|\\
 			&\leq\min_{p_m \in P_m}\|I- p_m(U\Lambda U^{-1}) U(I-\Lambda)\Lambda^{-m-1}U^{-1}\| \|Mr_k\|\\
 			& \leq \|U\|\|U^{-1}\|\min_{p_m \in P_m}\|I- p_m(\Lambda)(I-\Lambda)\Lambda^{-m-1}\| \|Mr_k\|\\
 			& \leq \|U\|\|U^{-1}\|\min_{p_m\in P_m} \max_{t\in [a_1,a_2]}\left|1-p_m(t) t^{-m-1}(1-t)\right| \|Mr_k\|\\
 			& \leq \|U\|\|U^{-1}\|\min_{p_m \in P_m} \max_{t\in [1/a_2,1/a_1]}\left|1-p_m(1/t) t^{m}(t-1)\right| \|Mr_k\|\\
 			& \leq \|U\|\|U^{-1}\|\min_{p_m \in P_m} \max_{t\in [1/a_2,1/a_1]}\left|1-p_m(t)(t-1)\right| \|Mr_k\|\\
 			& \leq \|U\|\|U^{-1}\|\min_{q_{m+1}\in P_{m+1}, q_{m+1}(1)=1} \max_{t\in [1/a_2,1/a_1]}\left|q_{m+1}(t)\right| \|Mr_k\|.
 		\end{align*}
 		Using Lemma \ref{lem:chebyshe-property} for the right-hand side of the above inequality gives the desired result.
 	\end{proof}
 	Next, we present an error estimate for the residual of one-step NGMRES($m$), assuming the eigenvalues of $M$ lie within a generic domain in the complex plane.
 	\begin{theorem}\label{thm:diag-m=k}
 		Assume that $M$ is a diagonalizable matrix, i.e., $M=U\Lambda U^{-1}$, where $\Lambda$ is a diagonal matrix. Denote the eigenvalues of $M$ as $\Sigma(M)$. Assume that $m=k$.  Then, the 2-norm of the residual $r_{k+1}$ obtained from one-step NGMRES($m$) satisfies
 		\begin{equation}\label{eq:error-diag-m=k}
 			\|r_{m+1} \|\leq \chi^{(m+1)}\cdot \kappa_2(U)\cdot\| r_0\|.
 		\end{equation}
 		where $\kappa_2(U)=\|U\|\|U^{-1}\|$ and 
 		\begin{equation}\label{eq:def-chi}
 			\chi(m+1)=\min_{q_{m+1} \in P_{m+1}, q_{m+1}(1)=1} \max_{t\in\Sigma(M)}\left| q_{m+1}(t)\right|.
 		\end{equation}
 	\end{theorem}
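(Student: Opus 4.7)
The plan is to adapt the proof of Theorem \ref{thm:diag} but, instead of relating $r_{m+1}$ to the one-step quantity $Mr_k$, to push everything back to the initial residual $r_0$. Since $m=k$, the simplification $m_k=m$ and $E=(\Lambda-I)^2\Lambda^{k-m}=(\Lambda-I)^2$ holds, and the accumulated fixed-point iterates give enough Krylov information to express the bound in a single polynomial in $M$ applied to $r_0$.

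First I would invoke Lemma \ref{lem:diag} to write
\begin{equation*}
\|r_{m+1}\| \;=\; \min_{c\in\mathbb{R}^{m+1}} \|Mr_k - U\mathcal{H}c\|.
\end{equation*}
The column span of $U\mathcal{H}$ consists of vectors $p_m(M)\,UEU^{-1}e_0$ with $p_m\in P_m$. With $k=m$ one has $UEU^{-1}=(M-I)^2=A^2$, and since $r_0=Ae_0$, this simplifies to $U\mathcal{H}c = p_m(M)A r_0$. Because $A=I-M$ and $M$ commutes with $A$, the iterates from the pure fixed-point sweep satisfy $r_k=M^k r_0$, hence $Mr_k=M^{m+1}r_0$. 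Substituting,
\begin{equation*}
\|r_{m+1}\| \;=\; \min_{p_m\in P_m}\big\|\bigl(M^{m+1} - p_m(M)(I-M)\bigr)r_0\big\|.
\end{equation*}

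Next I would perform the standard reparametrization: for any $q_{m+1}\in P_{m+1}$ with $q_{m+1}(1)=1$, the difference $q_{m+1}(t)-t^{m+1}$ vanishes at $t=1$, so it can be written as $-(1-t)p_m(t)$ with $p_m\in P_m$; conversely, every $t^{m+1}-(1-t)p_m(t)$ is such a $q_{m+1}$. Therefore the minimum above equals $\min_{q_{m+1}\in P_{m+1},\,q_{m+1}(1)=1}\|q_{m+1}(M)r_0\|$. Finally, using the diagonalization $M=U\Lambda U^{-1}$,
\begin{equation*}
\|q_{m+1}(M)r_0\| \;\le\; \|U\|\,\|q_{m+1}(\Lambda)\|\,\|U^{-1}\|\,\|r_0\|
\;\le\; \kappa_2(U)\,\max_{t\in\Sigma(M)}|q_{m+1}(t)|\,\|r_0\|,
\end{equation*}
and taking the minimum over admissible $q_{m+1}$ produces $\chi(m+1)\cdot\kappa_2(U)\cdot\|r_0\|$, as claimed.

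The only nontrivial step is the reparametrization that converts the minimum over $p_m\in P_m$ (without a normalization constraint) into the standard GMRES-type minimum over $q_{m+1}\in P_{m+1}$ with $q_{m+1}(1)=1$; the remaining manipulations are the same diagonalization bound used in Theorem \ref{thm:diag}. Note also that commutativity of $A$ and $M=I-A$ is what lets us collapse $Mr_k$ into $M^{m+1}r_0$; without it one could not pull everything back to $r_0$.
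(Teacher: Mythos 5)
Your proposal is correct and follows essentially the same route as the paper: invoke Lemma \ref{lem:diag} to write $\|r_{m+1}\|$ as a minimization over the column span of $U\mathcal{H}$, use $m=k$ and $r_k=M^kr_0$ to reduce it to $\min_{p_m\in P_m}\|\bigl(M^{m+1}-(I-M)p_m(M)\bigr)r_0\|$, reparametrize as a normalized polynomial $q_{m+1}$ with $q_{m+1}(1)=1$, and bound via the diagonalization by $\kappa_2(U)\,\chi(m+1)\,\|r_0\|$. The only cosmetic differences are that you substitute $E=(\Lambda-I)^2$ directly and make the reparametrization explicit (which is slightly cleaner, since the paper's intermediate expression carries a $\Lambda^{-m-1}$ factor), but the argument is the same.
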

 	
 	\begin{proof}
 		From the proof of Theorem \ref{thm:diag}, we have
 		\begin{align*}
 			Mr_k -p_m(M)UEU^{-1}e_0&= Mr_k -p_m(M)UEU^{-1}e_0\\
 			&=Mr_k- p_m(M)U(I-\Lambda)\Lambda^{-m-1}U^{-1}Mr_k\\
 			&=U\Lambda^{k+1}U^{-1}r_0-U(I-\Lambda)p_m(\Lambda)\Lambda^{k-m}U^{-1}r_0\\
 			&=U(\Lambda^{k+1}-(I-\Lambda)p_m(\Lambda)\Lambda^{k-m})U^{-1}r_0.
 		\end{align*}
 		Using $m=k$, we have
 		\begin{equation*}
 			Mr_m -p_m(M)UEU^{-1}e_0 =U(\Lambda^{m+1}-(I-\Lambda)p_m(\Lambda))U^{-1}r_0.
 		\end{equation*}
 		Thus,
 		\begin{align*}
 			\|r_{m+1} \| &=\min_{p_m \in P_m}\|Mr_m -p_m(M)UEU^{-1}e_0 \|\\
 			&\leq  \|U\|\|U^{-1}\| \min_{p_{m} \in P_{m}}\|\Lambda^{m+1}-(I-\Lambda)p_m(\Lambda)\| \|r_0\| \\
 			&\leq  \|U\|\|U^{-1}\|\min_{q_{m+1} \in P_{m+1}, q_{m+1}(1)=1} \max_{t\in\Sigma(M)}\left| q_{m+1}(t)\right| \|r_0\|,
 		\end{align*}
 		which is the desired result.
 	\end{proof}
 	
 	We note that the estimate \eqref{eq:error-diag-m=k} is the same as the residual norm of the  $(m+1)$th step of GMRES; see Proposition 6.32 in Saad's work~\cite{saad2003iterative} for further details. In Addition, Saad's work~\cite{saad2003iterative} presents the convergence behavior of GMRES when the eigenvalues of $A$ lie within an ellipse, which may offer useful insights for estimating \eqref{eq:def-chi}. However, we do not pursue a detailed estimation of \eqref{eq:def-chi}. In section \ref{sec:aNGMERS}, we will see that under certain conditions, the $(m+1)$th iteration, $u_{m+1}$, generated by aNGMRES($\infty,m+1$)---the method proposed in this work---is identical to the $(m+1)$th iteration of GMRES. In the next section, we discuss the estimation of errors and residuals for a special case of $M$.

 	\subsubsection{Symmetric and positive definite case}
 	In the following, we consider $M$ to be symmetric and positive definite (SPD), i.e., there exists a unitary matrix $U$ such that $M=U\Lambda U^T$, where $\Lambda$ is a diagonal matrix.  Then,  $e_{j+1}=M^{j+1}e_0$ and $e_{j+1}-e_j=M^j(M-I)e_0=U\Lambda^j(\Lambda-I)U^Te_0$.

 	\begin{lemma}\label{lem:residual-spd}
 		Assume that $M$ is SPD, i.e., $M=U\Lambda U^T$, where $U$ is a unitary matrix and $\Lambda$ is a diagonal matrix. Then, the error $e_{k+1}$ obtained from one-step NGMRES($m$) satisfies
 		\begin{equation*}
 			e_{k+1}=U \bar{G}_k U^T Me_k,
 		\end{equation*}
 		where $\bar{G}_k=\Theta \bar{R}_k \Theta^{-1}$ with $\bar{R}_k=  I-\mathcal{H}(\mathcal{H}^T\mathcal{H})^{-1}\mathcal{H}^T$, and the residual $r_{k+1}$ satisfies	
 		\begin{equation*}
 			r_{k+1}=U \bar{R}_k U^T Mr_k.
 		\end{equation*}
 	\end{lemma}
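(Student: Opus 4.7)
My strategy is to specialize the derivation of Lemma \ref{lem:diag} to the SPD setting, where the orthogonality $U^T U = I$ genuinely simplifies the Gram matrix appearing inside the projector. First, I would repeat the factorizations leading up to \eqref{eq:Gk-diag} with $U^{-1}$ replaced by $U^T$. Because $M = U\Lambda U^T$, the same chain of identities yields
\begin{equation*}
\mathcal{S}_k = U(\Lambda - I)\Lambda^{k-m_k}\mathcal{F}_{m_k+1}(\Lambda, U^T e_0), \qquad \mathcal{D}_k = A\mathcal{S}_k = -U\mathcal{H},
\end{equation*}
where $\mathcal{H}$ is exactly the Krylov matrix in \eqref{eq:def-H} with $U^T e_0$ in place of $U^{-1}e_0$. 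Nothing essentially new happens at this stage.

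Next, I would form $I - AW_k$ using \eqref{eq:ekplusone} and $A\mathcal{S}_k = -U\mathcal{H}$, obtaining $I - U\mathcal{H}(\mathcal{H}^T U^T U \mathcal{H})^{-1}\mathcal{H}^T U^T$. The key observation, and the only real deviation from Lemma \ref{lem:diag}, is that $U^T U = I$ collapses the weighted Gram matrix to $\mathcal{H}^T\mathcal{H}$. Factoring $U$ and $U^T$ out and using $UU^T = I$ then gives
\begin{equation*}
I - AW_k = U\bigl(I - \mathcal{H}(\mathcal{H}^T\mathcal{H})^{-1}\mathcal{H}^T\bigr)U^T = U\bar{R}_k U^T,
\end{equation*}
and the residual identity $r_{k+1} = U\bar{R}_k U^T M r_k$ is immediate from \eqref{eq:ekplusone}.

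For the error identity I would use the same similarity trick employed in \eqref{eq:WkA}, namely $I - W_k A = A^{-1}(I - AW_k)A$. Writing $A = U(I-\Lambda)U^T = U\Theta^{-1}U^T$ with $\Theta = (I-\Lambda)^{-1}$, and hence $A^{-1} = U\Theta U^T$, conjugation by $A^{\pm 1}$ commutes cleanly through the unitary factors and produces $I - W_k A = U\Theta\bar{R}_k\Theta^{-1}U^T = U\bar{G}_k U^T$, so \eqref{eq:ekplusone} yields $e_{k+1} = U\bar{G}_k U^T M e_k$.

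The entire argument is essentially a specialization of Lemma \ref{lem:diag}; no new spectral or polynomial input is required, and in particular no analogue of Lemma \ref{lem:chebyshe-property} is invoked at this stage. The one point deserving care is verifying that unitarity of $U$ is used exactly once, to eliminate the $U^T U$ factor in the Gram matrix, and that no hidden $U^{-1}$ leaks in elsewhere; the main obstacle is therefore just careful bookkeeping of where orthogonality enters.
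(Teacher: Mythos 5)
Your proposal is correct and follows the same route as the paper, whose proof is simply to replace $U^{-1}$ by $U^T$ in Lemma \ref{lem:diag} and use unitarity to collapse the Gram matrix $\mathcal{H}^T U^T U\mathcal{H}$ to $\mathcal{H}^T\mathcal{H}$; you merely spell out the bookkeeping the paper leaves implicit.
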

 	
 	\begin{proof}
 		Since $M=U\Lambda U^T$, where $U$ is a unitary matrix, replacing $U^{-1}$ by $U^T$ in Lemma \ref{lem:diag} gives the desired results.
 	\end{proof}

 	\begin{theorem}
 		Assume that $M$ is SPD, i.e., $M=U\Lambda U^T$, where $U$ is a unitary matrix and $\Lambda$ is a diagonal matrix,  the eigenvalues of $M$ are contained in an interval $[a_1,a_2]$ that does not include $0$ and 1, and $m<k$.  Then, the 2-norm of the error $e_{k+1}$ obtained from one-step NGMRES($m$) satisfies
 		\begin{equation}\label{eq:ek1-estimate-spd}
 			\|\Theta^{-1}U^Te_{k+1}\| \leq \epsilon(a_1,a_2,m+1)\cdot\|\Theta^{-1}U^TMe_k\|,
 		\end{equation}
 		and the 2-norm of  the residual $r_{k+1}$ satisfies
 		\begin{equation}\label{eq:rk-estimate-spd}
 			\|r_{k+1} \|\leq \epsilon(a_1,a_2,m+1)\cdot\|Mr_k\|.
 		\end{equation}
 		Moreover, if $m=k$, then the 2-norm of the residual $r_{k+1}$ obtained from one-step NGMRES($m$) satisfies
 		\begin{equation}\label{eq:error-sys-m=k}
 			\|r_{m+1} \|\leq \chi(m+1)\| r_0\|,
 		\end{equation}
 		where $\chi$ is defined in \eqref{eq:def-chi}.
 	\end{theorem}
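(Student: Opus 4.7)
The plan is to lean on Lemma \ref{lem:residual-spd}, which already reduces everything to the orthogonal projector $\bar{R}_k = I - \mathcal{H}(\mathcal{H}^T\mathcal{H})^{-1}\mathcal{H}^T$. Since $\bar{R}_k$ is precisely the orthogonal projection onto $\mathrm{range}(\mathcal{H})^{\perp}$, for any vector $v$ we have $\|\bar{R}_k v\| = \min_{c}\|v-\mathcal{H}c\|$, and since the columns of $\mathcal{H}$ are $\Lambda^{i}(\Lambda-I)^2\Lambda^{k-m_k}U^{T}e_0$ for $i=0,\dots,m_k$, any $\mathcal{H}c$ is of the form $p_m(\Lambda)(I-\Lambda)^2\Lambda^{k-m_k}x_0$ with $p_m\in P_m$ and $x_0:=U^{T}e_0$. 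This is the linchpin that reduces matrix minimizations to polynomial ones. Note that because $U$ is orthogonal in the SPD setting, $\kappa_2(U)=1$, which is exactly why the factor $\kappa_2(U)$ from Theorem \ref{thm:diag} disappears.

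Next I would translate each quantity into the diagonal basis. Using $e_k = M^k e_0$ one gets $\Theta^{-1}U^{T}Me_k = (I-\Lambda)\Lambda^{k+1}x_0$, and using $r_k = (I-M)e_k$ one gets $U^{T}Mr_k = (I-\Lambda)\Lambda^{k+1}x_0$ as well; these two vectors coincide in the diagonal basis, so both \eqref{eq:ek1-estimate-spd} and \eqref{eq:rk-estimate-spd} collapse to the same inequality. The difference that appears inside the minimum factors as
\begin{equation*}
(I-\Lambda)\Lambda^{k+1}x_0 - p_m(\Lambda)(I-\Lambda)^2\Lambda^{k-m}x_0 = (I-\Lambda)\Lambda^{k-m}\bigl[\Lambda^{m+1}-p_m(\Lambda)(I-\Lambda)\bigr]x_0.
\end{equation*}
Squaring the 2-norm and factoring out the common weight $|1-\lambda_i|^2|\lambda_i|^{2(k-m)}|\lambda_i|^{2(m+1)}|(x_0)_i|^2$ exhibits the ratio $\|r_{k+1}\|^2/\|Mr_k\|^2$ as a weighted average of $\bigl|1-p_m(\lambda_i)(1-\lambda_i)/\lambda_i^{m+1}\bigr|^2$, which is bounded by its maximum over $[a,b]$.

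Then I would perform the standard $s=1/t$ substitution. Writing $\tilde p_m(s):=s^m p_m(1/s)$, which ranges freely over $P_m$ as $p_m$ does, and defining $q_{m+1}(s):=1-(s-1)\tilde p_m(s)$, which is an arbitrary element of $\{q\in P_{m+1}: q(1)=1\}$, converts the maximum to
\begin{equation*}
\min_{q_{m+1}\in P_{m+1},\,q_{m+1}(1)=1}\;\max_{s\in[1/b,1/a]}\bigl|q_{m+1}(s)\bigr| = \epsilon(a,b,m+1)
\end{equation*}
by Lemma \ref{lem:chebyshe-property}. This proves \eqref{eq:ek1-estimate-spd} and \eqref{eq:rk-estimate-spd} simultaneously.

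For the last inequality \eqref{eq:error-sys-m=k} with $m=k$, the exponent $k-m$ vanishes, so the factorization becomes $(I-\Lambda)\bigl[\Lambda^{m+1}-p_m(\Lambda)(I-\Lambda)\bigr]x_0$ and the natural denominator is $\|(I-\Lambda)x_0\| = \|r_0\|$ rather than $\|Mr_k\|$. Setting $q_{m+1}(t):=t^{m+1}+p_m(t)(t-1)$, which satisfies $q_{m+1}(1)=1$, and bounding the weighted sum by the pointwise maximum over $\lambda_i\in\Sigma(M)$ yields $\|r_{m+1}\|\leq \chi(m+1)\|r_0\|$ directly from the definition \eqref{eq:def-chi}. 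The main obstacle, more bookkeeping than conceptual, is keeping the exponents and substitutions consistent so that the $(I-\Lambda)\Lambda^{k-m}$ factor cancels cleanly; once that is handled, the argument is a mild adaptation of the diagonalizable proof with the simplification $\kappa_2(U)=1$ and with the domain of maximization shrunk to the spectrum itself when $m=k$.
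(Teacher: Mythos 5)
Your proposal is correct and follows essentially the same route as the paper: reduce via the orthogonal projector $\bar{R}_k$ to a polynomial least-squares problem in the diagonal basis, apply the $t\mapsto 1/t$ substitution together with Lemma \ref{lem:chebyshe-property} to get $\epsilon(a,b,m+1)$, and for $m=k$ drop the $\Lambda^{k-m}$ factor to obtain the $\chi(m+1)$ bound. The only (harmless) difference is that you prove \eqref{eq:rk-estimate-spd} and \eqref{eq:error-sys-m=k} directly by observing $\Theta^{-1}U^TMe_k=U^TMr_k=(I-\Lambda)\Lambda^{k+1}U^Te_0$, whereas the paper obtains them by citing Theorems \ref{thm:diag} and \ref{thm:diag-m=k} with $\kappa_2(U)=1$.
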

 	
 	\begin{proof}
 		From Lemma \ref{lem:residual-spd}, we have
 		\begin{align*}
 			\|\Theta^{-1}U^Te_{k+1} \| &=\|(I-\mathcal{H}(\mathcal{H}^T\mathcal{H})^{-1}\mathcal{H}^T)\Theta^{-1}U^TMe_k \| \\
 			& =\min_{c \in \mathbb{R}^{m+1}}\| \Theta^{-1}U^TMe_k -\mathcal{H}c\|\\
 			& =\min_{p_m \in P_m}\| \Theta^{-1}U^TMe_k -p_m(\Lambda)EU^Te_0 \|,
 		\end{align*}
 		where $p_m$ is a polynomial with degree at most $m$.  
 		
 		Note that $U^TMe_k=U^TM^{k+1}e_0=\Lambda^{k+1}U^Te_0$. Thus,
 		\begin{align*}
 			EU^Te_0&= (\Lambda-I)^2\Lambda^{k-m}U^Te_0\\
 			&=(I-\Lambda)\Lambda^{-m-1}(I-\Lambda)\Lambda^{k+1}U^Te_0\\
 			&= (I-\Lambda)\Lambda^{-m-1}\Theta^{-1} U^TMe_k.
 		\end{align*}
 		It follows that
 		\begin{align*}
 			\|\Theta^{-1}U^Te_{k+1}\|& \leq \min_{p_m \in P_m}\|I-p_m(\Lambda)\Lambda^{-m-1}(I-\Lambda)\| \|\Theta^{-1}U^TMe_k\|\\
 			& \leq \min_{p_m\in P_m} \max_{t\in [a_1,a_2]}\left|1-p_m(t) t^{-m-1}(1-t)\right| \|\Theta^{-1}U^TMe_k\|\\
 			& \leq \min_{p_m \in P_m} \max_{t\in [1/a_2,1/a_1]}\left|1-p_m(1/t) t^{m}(t-1)\right| \|\Theta^{-1}U^TMe_k\|\\
 			& \leq \min_{p_m \in P_m} \max_{t\in [1/a_2,1/a_1]}\left|1-p_m(t)(t-1)\right| \|\Theta^{-1}U^TMe_k\|\\
 			& \leq \min_{q_{m+1}\in P_{m+1}, q_{m+1}(1)=1} \max_{t\in [1/a_2,1/a_1]}\left|q_{m+1}(t)\right| \|\Theta^{-1}U^TMe_k\|.
 		\end{align*}
 		Using Lemma \ref{lem:chebyshe-property} gives \eqref{eq:ek1-estimate-spd}.
 		
 		The second result can be directly obtained from Theorem \ref{thm:diag} using the fact that $\kappa_2(U)=1$. The last result, \eqref{eq:error-sys-m=k}, is obtained from Theorem \ref{thm:diag-m=k}.
 		
 	\end{proof}
 	
 	Note that if the $(k+1)$th iterate is obtained by the fixed-point iteration, then the corresponding residual is $Mr_k$.  Therefore, the factor $\epsilon(a_1,a_2,m+1)$ in \eqref{eq:rk-estimate-spd} provides a quantitative measurement of how much the fixed-point iteration can be improved.
 	
 	\begin{remark}
 		Note that one-step Anderson acceleration for the symmetric case has been studied  \cite{yang2022anderson}. The Chebyshev polynomial $\epsilon(a_1, a_2,m+1)$ for NGMRES($m$) is of degree $m+1$, but it is $m$ for AA($m$), and $\Theta$ is $\Lambda(\Lambda-I)^{-1}$.
 	\end{remark} 
 	
 Based on the one-step NGMRES($m$) described above, we next propose a new variant of NGMRES and conduct its convergence analysis.
 	
 	\section{Alternating NGMRES}\label{sec:aNGMERS}
 	
 	Note that NGMRES($m$) requires solving a least-squares problem at each iteration, which is computationally expensive for large $m$. To mitigate this, we apply NGMRES($m$) at periodic intervals within the fixed-point iteration.  We propose the following alternating NGMRES with depth $m$ and periodicity $p$, denoted by aNGMRES($m,p$) in Algorithm \ref{alg:aNGMRESm}. The main idea is that every $p-1$ steps of fixed-point iterations are followed by one step of NGMRES($m$). By doing this, we do not need to solve the least-squares problem at every iteration. In Algorithm \ref{alg:aNGMRESm}, $u_k$ defined in \eqref{eq:Alg-NG} is updated using \eqref{NGMRES-update}.  We notice that the same idea has been applied to alternating Anderson acceleration \cite{pratapa2016anderson,lupo2019convergence,suryanarayana2019alternating,pollock2019anderson}. There are two advantages of this new algorithm. Firstly, aNGMRES($m,p$) solves a least squares problem only at periodic intervals of length $p$, which reduces computational complexity and increases locality. Secondly, it may enhance the robustness against
 	stagnations, which is observed in NGMRES($m$) \cite{GreifHe25NGMRES}, by making their occurrence less likely.  We will explore connections between aNGMRES, GMRES and restarted GMRES (GMRES($m$)) where GMRES is restarted once the Krylov subspace reaches dimension $m$ and the current approximate solution becomes the new initial guess for the next $m$ iterations.
 	\begin{algorithm}[h]
 		\caption{Alternating NGMRES with depth $m$ and periodicity $p$: aNGMRES($m,p$)} \label{alg:aNGMRESm}
 		\begin{algorithmic}[1] 
 			\State Given $u_0, m\geq 0$
 			\For {$k= 1,2,\cdots$}
 			\If { mod($k,p$)=0,}
 			\State\begin{equation}\label{eq:Alg-NG}
 				u_k\leftarrow {\rm NGMRES}(m)
 			\end{equation}                 
 			\Else
 			\State\begin{equation*}
 				u_k\leftarrow q(u_{k-1})
 			\end{equation*}          
 			\EndIf
 			\EndFor
 		\end{algorithmic}
 	\end{algorithm}
 	Note that when $p=1$, aNGMRES($m,p$) is NGMRES($m$).  When $m=\infty$, then \eqref{eq:Alg-NG} is NGMRES($k-1$), which uses all previous iterates to compute $u_k$. We comment that in Algorithm \ref{alg:aNGMRESm}, the fixed-point iteration operator $q$ can be any iterative method. However, in this work, we limit ourselves to the Richardson iteration \eqref{eq:FP} for the purpose of convergence analysis.
 	
 	\subsection{Comparison between aNGMRES, GMRES, and restarted GMRES}
 	To study the relationship between aNGMRES($m,p$), GMRES, and restarted GMRES, we first introduce some terminologies. We denote the Krylov subspace associated with $A$ and the initial residual $r_0$ by
 	\begin{equation*}
 		\mathcal{K}_k(A, r_0)={\rm span}\{r_0, Ar_0,\cdots, A^{k-1}r_0\}.
 	\end{equation*}
 	
 	Following the work of Potra and Engler \cite{potra2013characterization}, we define the stagnation index of full GMRES as follows.
 	\begin{equation*}
 		\eta^G=\min\left\{\ell \in \mathbb{N}: u_{\ell}^G=u_{\ell+1}^G\right\}.
 	\end{equation*}
 	
 	The grade of $r_0\neq 0$ with respect to $A$ is defined as \cite{potra2013characterization}
 	\begin{align*}
 		\nu=\nu(A,r_0)&=\max\{\ell\in\mathbb{N}: \dim\mathcal{K}_{\ell}=\ell\}\\
 		& =\min\left\{\ell\in\mathbb{N}: r_0, Ar_0, \cdots, A^{\ell}r_0 \quad\text{are linearly dependent}\right\}.
 	\end{align*}

 	\begin{lemma}\label{lem:FP-kry}
 		Let $u_k=q(u_{k-1})=Mu_{k-1}+b$ for $k=1, 2,\cdots$. Then,
 		\begin{equation*}
 			\mathcal{K}_{j}(A,r_0)={\rm span}\{u_1-u_0, u_2-u_0,\cdots, u_j-u_0\}, \forall j\in\mathbb{N}.
 		\end{equation*}
 	\end{lemma}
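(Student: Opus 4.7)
The plan is to reduce everything to the single identity $u_k - u_{k-1} = -r_{k-1}$ and the fact that the residuals of the Richardson iteration satisfy $r_k = M^k r_0$. From the fixed-point update $u_k = Mu_{k-1} + b = u_{k-1} - Au_{k-1} + b = u_{k-1} - r_{k-1}$ we immediately get $u_k - u_{k-1} = -r_{k-1}$, and from $r_k = Au_k - b = A(Mu_{k-1}+b) - b = M r_{k-1}$ we get $r_k = M^k r_0$ by a one-line induction. Telescoping then yields
\begin{equation*}
u_j - u_0 \;=\; \sum_{k=1}^{j}(u_k - u_{k-1}) \;=\; -\sum_{i=0}^{j-1} M^i r_0,
\end{equation*}
so the span on the right of the claimed equality is contained in $\operatorname{Span}\{r_0, Mr_0, \ldots, M^{j-1}r_0\}$.

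Next I would argue that $\operatorname{Span}\{r_0, Mr_0, \ldots, M^{j-1}r_0\}$ coincides with $\mathcal{K}_j(A,r_0)$. Because $M = I - A$, each $M^i r_0$ is a polynomial of degree $i$ in $A$ applied to $r_0$, and conversely each $A^i r_0 = (I-M)^i r_0$ is a polynomial of degree $i$ in $M$ applied to $r_0$. So both spans equal $\{p(A)r_0 : \deg p \le j-1\}$. This change-of-basis step is the only place where one has to be slightly careful; the point is that it works regardless of whether these vectors are linearly independent, since we are comparing the images of two equal polynomial spaces under the linear map $p \mapsto p(A)r_0$.

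It remains to show the reverse inclusion, i.e.\ that $\operatorname{Span}\{u_1-u_0,\ldots,u_j-u_0\}$ contains $\mathcal{K}_j(A,r_0)$. This follows from the invertible "consecutive differences" relation
\begin{equation*}
-M^{k-1}r_0 \;=\; u_k - u_{k-1} \;=\; (u_k - u_0) - (u_{k-1} - u_0), \qquad k=1,\ldots,j,
\end{equation*}
(using the convention $u_0 - u_0 = 0$), which recovers each basis vector $M^{k-1}r_0$ of $\mathcal{K}_j(A,r_0)$ as a linear combination of the $\{u_i - u_0\}_{i=1}^{j}$. Equivalently, the transition matrix from $\{u_i-u_0\}$ to $\{-M^{i-1}r_0\}$ is bidiagonal with $\pm 1$ entries, hence unimodular.

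I do not expect a genuine obstacle here; the proof is essentially bookkeeping with telescoping sums. The only mild subtlety is making the argument uniform in $j$ without assuming linear independence of the Krylov basis, which is why I would phrase the equality of spans through the two change-of-basis identities $M^i = (I-A)^i$ and $A^i = (I-M)^i$ rather than through a dimension count.
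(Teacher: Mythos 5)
Your proof is correct. Both inclusions are established soundly: the telescoping identity $u_j-u_0=-\sum_{i=0}^{j-1}M^i r_0$ (from $u_k-u_{k-1}=-r_{k-1}$ and $r_k=Mr_{k-1}$) gives one direction, the fact that consecutive differences recover the monomials $M^{k-1}r_0$ gives the other, and the identification of the $M$-Krylov space with $\mathcal{K}_j(A,r_0)$ via the degree-preserving substitution $M=I-A$, $A=I-M$ is valid and, as you note, does not require linear independence of the Krylov vectors. The route differs from the paper's: the paper proves the result by induction, expanding $u_k-u_0$ in the basis of powers of $A$ as $\sum_{i=0}^{k-1}c_{k,i}A^i r_0$, tracking the leading coefficient $c_{k,k-1}=(-1)^k$ by a separate induction, and then solving an upper-triangular linear system with nonzero diagonal to show $A^s r_0$ lies in the span of the $u_i-u_0$. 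Your choice to work in the $M$-power basis makes that bookkeeping unnecessary, since there the differences $u_k-u_{k-1}$ are exactly $-M^{k-1}r_0$ and the transition matrix is trivially unimodular bidiagonal; the price is the extra (easy) change-of-basis step between polynomials in $M$ and in $A$, which the paper avoids by working with $A$-powers throughout. In effect your argument is a cleaner repackaging of the same underlying fact --- the paper's coefficient $(-1)^k$ is precisely the leading $A$-coefficient of $-(I-A)^{k-1}r_0$ --- but it dispenses with the induction on the coefficients and the explicit triangular system, which makes it shorter and arguably more transparent.
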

 	\begin{proof}
 		We proceed with the proof by induction.  Note that
 		\begin{equation}\label{eq:u1u0}
 			u_1=Mu_0+b=u_0-r_0.
 		\end{equation}
 		It is obvious that $\text{span} \{u_1-u_0 \}= \mathcal{K}_{1}(A,r_0)$. Assume that for $j\leq s$, we have
 		\begin{equation*}
 			\mathcal{K}_{j}(A,r_0)=\text{span}\{u_1-u_0, u_2-u_0,\cdots, u_j-u_0\}.
 		\end{equation*} 
 		Let $w_j=u_j-u_0$. It follows that $w_j \in \mathcal{K}_{j}(A,r_0)$. Next, we consider $j=s+1$. Note that $$u_{s+1}=q(u_s)=Mu_s+b=(I-A)u_s+b=u_s+b-A(u_0+w_s)=u_s-r_0-Aw_s.$$ Thus,
 		\begin{equation*}
 			u_{s+1}-u_0=u_s-u_0-r_0-Aw_s=w_s-r_0-Aw_s\in\mathcal{K}_{s+1}(A,r_0).
 		\end{equation*}
 		It follows that
 		\begin{equation*}
 			\Delta:=\text{span}\{u_1-u_0, u_2-u_0,\cdots, u_{s+1}-u_0\} \subset \mathcal{K}_{s+1}(A,r_0).
 		\end{equation*} 
 		Next, we prove $\mathcal{K}_{s+1}(A,r_0)\subset \Delta $, i.e., that $A^sr_0 \in \Delta$.
 		
 		Since $\Delta \subset \mathcal{K}_{s+1}(A,r_0)$, for $k=1, 2, \cdots,s+1$,
 		\begin{equation}\label{eq:Delta-Ks1}
 			u_k =u_0+\sum_{i=0}^{k-1}c_{k,i}A^{i}r_0.
 		\end{equation}
 		We claim that 
 		\begin{equation}\label{eqckk}
 			c_{k,k-1}=(-1)^k, \quad k=1,2, \dots, s+1.
 		\end{equation}
 		For $k=1$, from  \eqref{eq:u1u0}, equation \eqref{eqckk} holds. Assume that for $k\leq s$, $c_{k,k-1}=(-1)^k$.	
 		Then
 		\begin{align*}
 			u_{s+1}=q(u_s)&=u_s-r_s\\
 			&=u_0+\sum_{i=0}^{s-1}c_{s,i}A^{i}r_0 +b-A(u_0+\sum_{i=0}^{s-1}c_{s,i}A^{i}r_0)\\
 			&=u_0-r_0+(I-A)\sum_{i=0}^{s-1}c_{s,i}A^{i}r_0.
 		\end{align*}
 		Thus, $c_{s+1,s}$, the coefficient of $A^sr_0$ in $u_{s+1}$ is $-c_{s,s-1}=-(-1)^{s}=(-1)^{s+1}$.
 		
 		Next, we show that there exist $\{x_i\}_{i=1}^{s+1}$ such that $A^sr_0=\sum_{i=1}^{s+1}x_i(u_i-u_0)$. Using \eqref{eq:Delta-Ks1} gives
 		\begin{equation*}
 			\begin{bmatrix}
 				c_{1,0} & c_{2,0} & c_{3,0} & \cdots & c_{s,0} & c_{s+1,0} \\
 				0      & c_{2,1} & c_{3,1} & \cdots & c_{s,1} & c_{s+1,1} \\
 				0      & 0      & c_{3,2} & \cdots & c_{s,2} & c_{s+1,2} \\
 				\vdots & \vdots & \vdots & \ddots   &\vdots & \vdots \\
 				0      & 0      & 0      & \cdots & c_{s,s-1} & c_{s+1,s-1}\\
 				0      & 0      & 0      & \cdots &0  & c_{s+1,s}
 			\end{bmatrix}	
 			\begin{bmatrix}
 				x_1 \\ x_2 \\x_3 \\ \vdots \\x_s \\ x_{s+1}
 			\end{bmatrix}=
 			\begin{bmatrix}
 				0 \\ 0 \\0 \\ \vdots \\ 0\\ 1
 			\end{bmatrix}.
 		\end{equation*}
 		From \eqref{eqckk}, we know that the above system has a solution. It follows that $A^sr_0 \in \Delta$. We have $\Delta = \mathcal{K}_{s+1}(A,r_0)$, which is the desired result.
 	\end{proof}
 	Lemma \ref{lem:FP-kry} establishes a connection between Krylov subspace of GMRES and the space associated with fixed-point iterations, which is linked to the periodic iterates of aNGMRES. If we consider aNGMRES($m,p$) with finite $m$ and $p=m+1$ applied to $q(u)=Mu+b$, then aNGMRES($m,m+1$) and the restarted GMRES (GMRES($m+1$))  are identical at the end of each periodic interval of length $m+1$. We state it in the following theorem.
 	
 	\begin{theorem}\label{thm:aNGMRES=rGMRES}
 		Consider aNGMRES($m,m+1$) presented in Algorithm \ref{alg:aNGMRESm} applied to $q(u)=Mu+b$. Assume that aNGMRES($m,m+1$) and GMRES($m+1$) use the same initial guess $u_0$ and let the corresponding sequences generated by them be denoted as $\left\{u_k \right\}$ and $\left\{u_k^{G{(m+1)}}\right\}$. Then, 
 		\begin{equation*}
 			u_{j(m+1)} =u_{j(m+1)}^{G(m+1)}, \quad  j=1, 2, \cdots
 		\end{equation*}
 	\end{theorem}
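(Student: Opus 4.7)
The plan is to proceed by induction on $j$, showing that in each periodic block of length $m+1$ the NGMRES step computes exactly the same minimum-residual iterate over the same affine Krylov subspace as restarted GMRES does. Lemmas \ref{lem:NGMRES-mini-resi} and \ref{lem:FP-kry} are the two main ingredients, combined with the explicit form of the NGMRES update from Algorithm \ref{alg:NGMRESm-simp}.

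For the base case $j=1$: during the first $m$ steps aNGMRES$(m,m+1)$ coincides with the fixed-point iteration, producing $u_1,\dots,u_m$ with $u_i=q(u_{i-1})$. At step $k=m+1$, Algorithm \ref{alg:NGMRESm-simp} writes the NGMRES iterate as
\begin{equation*}
u_{m+1}=q(u_m)+\gamma_0\bigl(q(u_m)-u_m\bigr)+\sum_{i=1}^{m}\gamma_i\bigl(u_{m-i+1}-u_{m-i}\bigr),
\end{equation*}
with coefficients $\gamma_i$ chosen to minimize $\|r_{m+1}\|$ (Lemma \ref{lem:NGMRES-mini-resi}). Set $u_{m+1}^{FP}:=q(u_m)$. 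Applying Lemma \ref{lem:FP-kry} to the fixed-point sequence $u_0,u_1,\dots,u_m,u_{m+1}^{FP}$ yields
\begin{equation*}
\mathcal{K}_{m+1}(A,r_0)=\mathrm{Span}\{u_1-u_0,\dots,u_m-u_0,u_{m+1}^{FP}-u_0\},
\end{equation*}
and a triangular change of basis (telescoping $u_i-u_0=\sum_{\ell=1}^{i}(u_\ell-u_{\ell-1})$) shows that the $m+1$ correction vectors $\{q(u_m)-u_m, u_m-u_{m-1},\dots,u_1-u_0\}$ span the same subspace. Consequently, as the $\gamma_i$ vary, $u_{m+1}$ ranges over the entire affine space $u_0+\mathcal{K}_{m+1}(A,r_0)$, which is precisely the search space of one cycle of GMRES$(m+1)$. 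Since both methods select the unique element of this affine space that minimizes $\|Au-b\|$ (uniqueness follows from invertibility of $A$), we conclude $u_{m+1}=u_{m+1}^{G(m+1)}$.

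For the inductive step, assume $u_{j(m+1)}=u_{j(m+1)}^{G(m+1)}$. By the construction in Algorithm \ref{alg:aNGMRESm}, between indices $j(m+1)$ and $(j+1)(m+1)$ aNGMRES performs $m$ fixed-point iterations starting from $u_{j(m+1)}$, followed by one depth-$m$ NGMRES step that uses only the last $m+1$ iterates. This is structurally identical to the base case with $u_{j(m+1)}$ playing the role of the new initial guess and $r_{j(m+1)}$ the new initial residual, so the same argument shows that $u_{(j+1)(m+1)}$ is the minimum-residual point in $u_{j(m+1)}+\mathcal{K}_{m+1}(A,r_{j(m+1)})$, which matches $u_{(j+1)(m+1)}^{G(m+1)}$ by the restart rule of GMRES$(m+1)$. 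The main obstacle I anticipate is the span equality, i.e., rigorously verifying that the $m+1$ difference vectors used by NGMRES actually span the full Krylov subspace $\mathcal{K}_{m+1}(A,r_0)$ in the generic regime $\nu(A,r_0)\geq m+1$, and separately handling the degenerate regime $\nu(A,r_0)<m+1$; in the latter, Lemma \ref{lem:FP-kry} forces the search space to saturate, both methods attain the exact solution, and the identity persists despite the loss of uniqueness of the minimizer.
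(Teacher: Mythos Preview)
Your proposal is correct and follows essentially the same approach as the paper: both reduce to showing that the single NGMRES($m$) step at the end of each block minimizes the residual over $u_0+\mathcal{K}_{m+1}(A,r_0)$ by combining Lemma~\ref{lem:NGMRES-mini-resi} with Lemma~\ref{lem:FP-kry}, and then invoke the periodic structure (your explicit induction) to handle all $j$. The only cosmetic differences are that the paper works from Algorithm~\ref{alg:NGMRESm} and reparametrizes directly to $u_p=u_0+\sum_i\tau_i(u_i-u_0)$ so Lemma~\ref{lem:FP-kry} applies without the telescoping step, and it does not spell out the uniqueness/degenerate-$\nu$ discussion that you add.
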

 The key idea of the proof is to examine the Krylov subspace associated with the aNGMRES iterates and to use the property that the $k$th NGMRES iterate minimizes the residual norm.
 	\begin{proof}
 		For aNGMRES($m,m+1$), the iterates
 		\begin{equation*}
 			\left\{u_{j(m+1)+1}, u_{j(m+1)+2}, \cdots, u_{(j+1)(m+1)}\right\},\quad j=0, 1, 2,\cdots
 		\end{equation*}
 		are constructed by first applying $m$ fixed-point iterations to $u_{j(m+1)}$, obtaining the first $m$ iterates. Then, $u_{(j+1)(m+1)}$ is obtained by applying NGMRES($m$) to $\{u_{j(m+1)}, u_{j(m+1)+1},\cdots, u_{j(m+1)+m}\}$.  The next $(m+1)$ iterates are obtained by repeating this procedure starting with $u_{(j+1)(m+1)}$. Thus, we only need to show  $u_{m+1} =u_{m+1}^{G(m+1)}$. 
 		
 		For aNGMRES($m,m+1$), we know that $u_i=q(u_{i-1})$ for $i=1, 2,\cdots, m=p-1$. Define $w_i=u_i-u_0, i=1, 2, \cdots, m$, and $w_p=q(u_{p-1})-u_0$. Note that $p=m+1$. At the $p$th iteration of aNGMRES($m,m+1$), we have
 		\begin{align*}
 			u_{p}&=q(u_{p-1})+\sum_{i=0}^{p-1} \beta^{(p-1)}_i\left(q(u_{p-1})- u_{p-1-i})\right)\\
 			&=(1+\sum_{i=0}^{p-1} \beta^{(p-1)}_i)(q(u_{p-1})-u_0)+u_0-\sum_{i=0}^{p-1} \beta^{(p-1)}_i\left(u_{p-1-i}-u_0\right)\\
 			&=(1+\sum_{i=0}^{p-1} \beta^{(p-1)}_i)w_p+u_0-\sum_{i=0}^{p-2} \beta^{(p-1)}_iw_{p-1-i}\\
 			&=u_0+\sum_{i=1}^{p} \tau^{(p-1)}_iw_i,
 		\end{align*}
 		where $\tau_i=-\beta^{(p-1)}_{p-1-i}$ for $i=1, 2, \cdots, p-1$ and $\tau_p=(1+\sum_{i=0}^{p-1} \beta^{(p-1)}_i)$. Let ${\bf \tau}=[\tau_1, \tau_2,\cdots, \tau_p]^T$. From Lemma \ref{lem:NGMRES-mini-resi}, ${\bf \tau}$ solves the following minimization problem
 		\begin{equation*}
 			\min_{{\bf\tau}\in\mathbb{R}^{p}}	\|Au_p-b\|=\min_{{\bf\tau}\in\mathbb{R}^{p}}\|A(u_0+\sum_{i=1}^{p} \tau^{(p-1)}_iw_i)-b\|=\min_{w \in \mathcal{K}_p(A,r_0)}\|A(u_0+w)-b\|,
 		\end{equation*}
 		where the last equality uses Lemma \ref{lem:FP-kry}.
 		According to the definition of GMRES algorithm, we have $u_p=u_p^{G}=u_p^{G(p)}$.
 	\end{proof}
 	
 	The above theorem suggests that aNGMRES($m,m+1$) can be treated as an alternative to restarted GMRES.  Note that when $m=0, p=1$,  aNGMRES($m,p$) is NGMRES(0), which is the same as GMRES(1) \cite{GreifHe25NGMRES}, i.e., $u_k=u_k^{G(1)}$ for all $k$, as stated in Theorem \ref{thm:aNGMRES=rGMRES}.

 	Next, we consider $m=\infty$.  We will show that aNGMRES($\infty,p$) and full GMRES  are identical at the end of each periodic interval of length $p$.  
 	\begin{theorem}\label{aNGMRES-GMRES}
 		Consider  aNGMRES($\infty,p$) presented in Algorithm \ref{alg:aNGMRESm} applied to $q(u)=Mu+b$. Assume that aNGMRES($\infty,p$) and full GMRES use the same initial guess $u_0$ and denote the corresponding sequences generated by them as $\left\{u_k \right\}$ and $\left\{u_k^{G }\right\}$, respectively. Then, 
 		\begin{equation}\label{eq:NG=G-jp}
 			u_{jp} =u_{jp}^{G}, \quad  j=1, 2, \cdots, j_*,
 		\end{equation}	
 		where $j_*p\leq \nu(A,r_0)$ and $j_*p\leq \eta^G$.
 		
 		Moreover, if the stagnation of GMRES does not occur, then there exists a $s\in\mathbb{N}$ such that
 		\begin{equation*}
 			u_{sp} =u^G_{\nu(A,r_0)}=u^*,  \quad \text{where}\quad (s-1)p<\nu(A,r_0)\leq sp.
 		\end{equation*}
 	\end{theorem}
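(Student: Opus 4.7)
The plan is to prove \eqref{eq:NG=G-jp} by induction on $j$, adapting the proof of Theorem \ref{thm:aNGMRES=rGMRES} to a cumulative Krylov subspace $\mathcal{K}_{jp}(A,r_0)$ rather than a restarted one. A natural strengthening of the inductive hypothesis will be useful: that $u_{jp}=u_{jp}^G$ \emph{and} $\mathrm{Span}\{u_i-u_0:i=1,\ldots,jp\}=\mathcal{K}_{jp}(A,r_0)$. The base case $j=0$ is trivial.

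For the inductive step with $j+1\le j_*$, I would first track the $p-1$ Richardson steps following $u_{jp}$. A direct computation (analogous to Lemma \ref{lem:FP-kry} but started from $u_{jp}$) gives $u_{jp+i}-u_{jp}\in\mathrm{Span}\{r_{jp},Ar_{jp},\ldots,A^{i-1}r_{jp}\}$. Combining this with the induction and the identity $r_{jp}=A(u_{jp}-u_0)+r_0\in\mathcal{K}_{jp+1}(A,r_0)$ shows each $u_{jp+i}-u_0$ lies in $\mathcal{K}_{jp+i}(A,r_0)$. At the NGMRES step, setting $k=(j+1)p-1$, the update $u_{(j+1)p}=q(u_k)+\sum_{i=0}^{k}\beta_i(q(u_k)-u_{k-i})$ places $u_{(j+1)p}-u_0$ in the span of $q(u_k)-u_0$ together with $u_i-u_0$ for $1\le i\le k$, hence inside $\mathcal{K}_{(j+1)p}(A,r_0)$. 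The crucial claim is that this containment is in fact an equality $\mathrm{Span}\{q(u_k)-u_0,u_1-u_0,\ldots,u_k-u_0\}=\mathcal{K}_{(j+1)p}(A,r_0)$; the hypotheses $(j+1)p\le\nu(A,r_0)$ (full Krylov dimension) and $(j+1)p\le\eta^G$ (no GMRES stagnation) would be invoked here to rule out premature collapse of the span. Given this equality, Lemma \ref{lem:NGMRES-mini-resi} says $u_{(j+1)p}$ minimizes $\|Ax-b\|$ over exactly the GMRES search space $u_0+\mathcal{K}_{(j+1)p}(A,r_0)$, so $u_{(j+1)p}=u_{(j+1)p}^G$, completing the induction.

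For the second statement, the no-stagnation hypothesis gives $u_{\nu(A,r_0)}^G=u^*$ and $\eta^G\ge\nu(A,r_0)$. Choosing $s$ with $(s-1)p<\nu(A,r_0)\le sp$, the first part applies through $j=s-1$ (since $(s-1)p<\nu(A,r_0)\le\eta^G$), giving $u_{(s-1)p}=u_{(s-1)p}^G$. Continuing the span analysis through the next $p$ steps, the NGMRES search space at step $sp$ contains $u_0+\mathcal{K}_{\nu(A,r_0)}(A,r_0)$, which contains $u^*$. Since this space offers a zero-residual point and NGMRES minimizes the residual, $u_{sp}=u^*$.

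The main obstacle is the span equality $\mathrm{Span}\{q(u_k)-u_0,u_1-u_0,\ldots,u_k-u_0\}=\mathcal{K}_{(j+1)p}(A,r_0)$: Lemma \ref{lem:FP-kry} handles pure fixed-point sequences, whereas here the sequence interleaves fixed-point steps with NGMRES steps that reset the natural reference point at each period boundary. I expect the cleanest route is the simultaneous induction sketched above, using non-stagnation to guarantee that each Richardson step after $u_{jp}$ genuinely contributes a new Krylov direction $A^{jp+i}r_0$ that was not yet represented in the span, and that each NGMRES iterate, although possibly falling inside the already-spanned subspace, does not obstruct the subsequent accumulation.
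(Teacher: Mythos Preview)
Your proposal is essentially the paper's own argument: both proceed by induction on $j$ with the strengthened hypothesis that $\mathrm{Span}\{u_i-u_0:1\le i\le jp\}=\mathcal{K}_{jp}(A,r_0)$, track the Richardson steps after $u_{jp}$ to build up Krylov directions, invoke $jp\le\nu(A,r_0)$ and $jp\le\eta^G$ for the span equality, and then use Lemma \ref{lem:NGMRES-mini-resi} to identify the NGMRES minimization with the GMRES one. The second part is also handled the same way.

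One clarification on your final paragraph: the NGMRES iterate $u_{jp}$ cannot ``possibly fall inside the already-spanned subspace'' $\mathcal{K}_{jp-1}$; if it did, the subsequent $p-1$ Richardson steps from $u_{jp}$ would only reach $\mathcal{K}_{(j+1)p-1}$ and the span equality would fail. The paper makes explicit the mechanism you allude to: since $u_{jp}=u_{jp}^G$ by induction and $jp\le\eta^G$ forces $u_{jp}^G\neq u_{jp-1}^G$, uniqueness of the GMRES minimizer over $u_0+\mathcal{K}_{jp-1}$ implies $u_{jp}-u_0\notin\mathcal{K}_{jp-1}$. This is precisely what propagates through Richardson to give $u_{jp+i}-u_0\in\mathcal{K}_{jp+i}\setminus\mathcal{K}_{jp+i-1}$ and hence the full span at step $(j+1)p$.
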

 	\begin{proof}
 		Define
 		\begin{equation*}
 			\mathcal{K}_{jp}^{\alpha}=\text{span}\left\{u_1-u_0, u_2-u_0, \cdots, u_{jp-1}-u_0, u_{jp}-u_0\right\},  \, j=1, 2, \cdots
 		\end{equation*}
 		Note that the first $p$ iterates, i.e., $\{u_j\}_{j=1}^p$, of aNGMRES($\infty,p$) are the same as these of aNGMRES($p-1,p$). 
 		From Theorem \ref{thm:aNGMRES=rGMRES}, we have
 		\begin{equation*}
 			u_p=u_p^G.
 		\end{equation*}
 		Since $p<\nu(A,r_0)$ and $p<\eta^G$, we have $\mathcal{K}_p^{\alpha}=\mathcal{K}_p(A,r_0)$.
 		We proceed with the proof by induction.
 		Assume that for $j\leq j_*$ the $(j-1)p$th iteration satisfies
 		\begin{equation}\label{eq:jm1equl}
 			u_{(j-1)p} =u_{(j-1)p}^{G},
 		\end{equation}
 		and 
 		\begin{equation*}
 			\mathcal{K}_{(j-1)p}^{\alpha}=\mathcal{K}_{(j-1)p}(A,r_0).
 		\end{equation*}
 		Next, we demonstrate that the above two equations hold for the index $jp$. From \eqref{eq:jm1equl} and $(j-1)p<\nu$, we have
 		\begin{equation*}
 			u_{(j-1)p} =u_0+w_{(j-1)p}, 
 		\end{equation*}
 		where
 		\begin{equation}\label{eq:multiplyA}
 			w_{(j-1)p}\in \mathcal{K}_{(j-1)p}(A,r_0) \quad \text{and}\quad w_{(j-1)p}\notin \mathcal{K}_{(j-1)p-1}(A,r_0).
 		\end{equation}
 		After the $(j-1)p$th iteration, we perform $p-1$ successive Richardson iterations.
 		For $i=1,2,\cdots, p-1$, define $u_{(j-1)p+i}=u_0+w_{(j-1)p+i}$. Then, for $i=1$
 		\begin{align*}
 			u_{(j-1)p+1}&=q(u_{(j-1)p})\\
 			&=(I-A)u_{(j-1)p}+b\\
 			&=(I-A)(u_0+w_{(j-1)p})+b\\
 			&=u_0-r_0+w_{(j-1)p}-Aw_{(j-1)p}.
 		\end{align*}
 		Therefore,
 		\begin{equation*}
 			w_{(j-1)p+1}=-r_0+w_{(j-1)p}-Aw_{(j-1)p}\in \mathcal{K}_1(A,r_0)+\mathcal{K}_{(j-1)p}(A,r_0) +A\mathcal{K}_{(j-1)p}(A,r_0).
 		\end{equation*}
 		From \eqref{eq:multiplyA}, we have
 		\begin{equation*}
 			w_{(j-1)p+1}\in \mathcal{K}_{(j-1)p+1}(A,r_0) \quad \text{and}\quad w_{(j-1)p+1}\notin \mathcal{K}_{(j-1)p}(A,r_0).
 		\end{equation*}
 		For $i=2,\cdots, p-1$,  repeating the same procedure leads to 
 		\begin{equation*}
 			w_{(j-1)p+i}\in \mathcal{K}_{(j-1)p+i}(A,r_0) \quad \text{and}\quad w_{(j-1)p+i}\notin \mathcal{K}_{(j-1)p+i-1}(A,r_0),
 		\end{equation*}
 		and 
 		\begin{equation*}
 			\left(q(u_{jp-1})-u_0\right)\in \mathcal{K}_{jp}(A,r_0) \quad \text{and}\quad \left(q(u_{jp-1})-u_0\right)\notin \mathcal{K}_{jp-1}(A,r_0).
 		\end{equation*}
 		Let
 		\begin{equation*}
 			\mathcal{\bar{K}}_{jp}=\text{span}\left\{u_1-u_0, u_2-u_0, \cdots, u_{jp-1}-u_0, q(u_{jp-1})-u_0\right\}.
 		\end{equation*}
 		Since $jp\leq\nu(A,r_0)$ and $jp\leq\eta^G$, the dimension of $\mathcal{K}_{jp}(A,r_0)$ is $jp$. Then, $\mathcal{\bar{K}}=\mathcal{K}_{jp}(A,r_0)$.
 		Next, we compute the  $(jp)$th iteration of  aNGMRES($\infty,p$), which is given by NGMRES($jp$) 
 		\begin{align*}
 			u_{jp}&=q(u_{jp-1})+\sum_{i=0}^{jp-1} \beta^{(jp-1)}_i\left(q(u_{jp-1})- u_{jp-1-i})\right)\\
 			&=(1+\sum_{i=0}^{jp-1} \beta^{(jp-1)}_i)(q(u_{jp-1})-u_0)+u_0-\sum_{i=0}^{jp-1} \beta^{(jp-1)}_i\left(u_{jp-1-i}-u_0\right)\\
 			&=(1+\sum_{i=0}^{jp-1} \beta^{(jp-1)}_i)(q(u_{jp-1})-u_0)+u_0-\sum_{i=0}^{jp-2} \beta^{(jp-1)}_i\left(u_{jp-1-i}-u_0\right)\\
 			&=u_0+\sum_{i=1}^{jp} \tau^{(jp-1)}_iz_i,
 		\end{align*}
 		where $\tau_i=-\beta^{(jp-1)}_{jp-1-i}, z_i=u_i-u_0$ for $i=1, 2, \cdots, jp-1$, and  $\tau_{jp}=(1+\sum_{i=0}^{jp-1} \beta^{(jp-1)}_i), z_{jp}=q(u_{jp-1})-u_0$. Let ${\bf \tau}=[\tau_1, \tau_2,\cdots, \tau_{jp}]^T$. Then $\tau$ solves
 		\begin{equation}\label{eq:jpNGMRES}
 			\min_{{\bf\tau}\in\mathbb{R}^{jp}}\|Au_{jp}-b\|=\min_{{\bf\tau}\in\mathbb{R}^{jp}}	\|A(u_0+\sum_{i=1}^{jp} \tau^{(jp-1)}_iz_i)-b\|=\min_{z \in \mathcal{\bar{K}}}\|A(u_0+z)-b\|.
 		\end{equation}
 		Thus, $u_{jp}=u_{jp}^G$.
 		
 		Next, we consider the situation that stagnation does not occur. The index $\nu(A,r_0)$ would be either $sp$ for some integer $s$ or $(s-1)p<\nu(A,r_0)<sp$. 
 		
 		If $\nu(A,r_0)=sp$, then GMRES converges to the exact solution at $(sp)$th iteration, i.e., $u_{sp}=u_{sp}^G=u^*$.
 		
 		If $(s-1)p<\nu(A,r_0)<sp$, then GMRES converges to the exact solution at $\nu$th iteration, and 
 		\begin{equation*}
 			\mathcal{\bar{K}_{\nu}}=\text{span}\left\{u_1-u_0, u_2-u_0, \cdots, u_{(s-1)p}-u_0, u_{(s-1)p+1}-u_0, \cdots, u_{\nu}-u_0\right\}=\mathcal{K_{\nu}}(A,r_0).
 		\end{equation*}
 		After the $(s-1)p$th iteration,  aNGMRES($\infty,p$) processes $(p-1)$ Richardson iterations, and  $\mathcal{\bar{K}}_{sp}$ has the same dimension as $\mathcal{K}_{\nu}(A,r_0)$. Similarly to \eqref{eq:jpNGMRES}, the $(sp)$th iteration of aNGMRES($\infty,p$) is given by
 		\begin{equation*}
 			u_{sp}=q(u_{sp-1})+\sum_{i=0}^{sp-1} \beta^{(sp-1)}_i\left(q(u_{sp-1})- u_{sp-1-i})\right).
 		\end{equation*}
 		Let $\bm \beta^{(sp-1)}=[\beta^{(sp-1)}_0, \beta^{(sp-1)}_1,\cdots, \beta^{(sp-1)}_{sp-1}]^T$, which solves	 
 		\begin{equation*}
 			\min_{\bm \beta^{(sp-1)}}	\|Au_{sp}-b\|=\min_{z\in\mathcal{\bar{K}}_{sp}}\|A(u_0+z)-b\|.
 		\end{equation*}
 		It follows that $u_{sp}=u^G_{\nu}=u^*$. We have completed the proof.
 	\end{proof}
 	
 	We note that when $p=1$, aNGMRES($\infty$,1) reduces to NGMRES($\infty$). As shown in our recent work \cite{GreifHe25NGMRES},   NGMRES($\infty$) is identical to GMRES if the norm of the residuals of GMRES is strictly decreasing. For $p=1$, our result in \eqref{eq:NG=G-jp} is consistent with recent findings \cite{GreifHe25NGMRES}.  Recently, Lupo Pasini \cite{lupo2019convergence} proposed Anderson-type acceleration of Richardson's iteration (AAR) and showed that the $(jp)$th iteration by full AAR ($m=\infty$) after the Anderson mixing is the same as the $(jp)$th GMRES iteration. Now, our theorem makes connection with AAR due to the equivalence to GMRES.
 	
 	When GMRES stagnation occurs, the convergence behavior of aNGMRES becomes complex. We do not explore this direction further from a theoretical perspective. Instead, we examine it numerically; see the numerical results of Example \ref{ex:blockCirex} in subsection \ref{sub:validation}.
 	
 	\subsection{Convergence analysis of aNGMRES}
 	In the following, we discuss the convergence of aNGMRES. We first consider the fixed-point iteration to be contractive. 
 	\begin{theorem}
 		Let $q(u)=Mu+b$. Consider  aNGMRES($m,p$) presented in Algorithm \ref{alg:aNGMRESm}. Assume that $\|M\|=c<1$. Then, the residual norm of aNGMRES($m,p$) ($m=\infty$ or any integer) converges to zero for any initial guess.
 	\end{theorem}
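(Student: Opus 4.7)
The plan is to establish a uniform geometric contraction $\|r_{k+1}\| \leq c\|r_k\|$ on \emph{every} step of aNGMRES$(m,p)$, irrespective of whether that step is a Richardson step or an NGMRES step, and irrespective of the depth $m$ (finite or $m=\infty$) and the period $p$. Once such a step-wise bound is in hand, iterating it yields $\|r_k\| \leq c^k \|r_0\|$, and the conclusion follows immediately from $c<1$.

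First I would handle the Richardson steps, which occur whenever $\mathrm{mod}(k,p)\neq 0$. A direct computation using $M = I-A$ shows that the residual operator satisfies $r(q(u)) = A(Mu+b) - b = AMu + Ab - b = M(Au-b) = Mr(u)$, since $AM = MA$ and $M+A=I$. Consequently, at any Richardson step $u_{k+1} = q(u_k)$ we have $r_{k+1} = Mr_k$, and therefore $\|r_{k+1}\| \leq \|M\|\,\|r_k\| = c\,\|r_k\|$.

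Next I would handle the NGMRES steps, which occur when $\mathrm{mod}(k,p)=0$. By Lemma \ref{lem:NGMRES-mini-resi}, the iterate produced by NGMRES$(m)$ from $u_k$ satisfies
\begin{equation*}
\|r_{k+1}\| \;=\; \min_{\beta_i^{(k)}} \Bigl\| r(q(u_k)) + \sum_{i=0}^{m_k} \beta_i^{(k)}\bigl(r(q(u_k))-r(u_{k-i})\bigr)\Bigr\|.
\end{equation*}
Taking the feasible choice $\beta_i^{(k)}=0$ for all $i$ gives $\|r_{k+1}\| \leq \|r(q(u_k))\|$, and combining this with the identity $r(q(u_k)) = Mr_k$ from the previous paragraph yields $\|r_{k+1}\| \leq c\,\|r_k\|$ again. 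Crucially, this bound is independent of $m_k = \min\{k,m\}$, so it applies uniformly for every finite $m$ and for $m=\infty$.

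Finally, combining the two cases, every iteration of aNGMRES$(m,p)$ contracts the residual by at least the factor $c$, so by induction $\|r_k\| \leq c^k \|r_0\|$, and hence $\|r_k\|\to 0$ as $k\to\infty$ for any initial guess $u_0$. There is essentially no obstacle here: the only subtle ingredient is recognizing that the residual-minimization property of the NGMRES step, together with the admissibility of the trivial correction $\beta=0$, always dominates the pure fixed-point step; everything else is a one-line geometric summation.
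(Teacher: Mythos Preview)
Your proof is correct and follows essentially the same approach as the paper's own proof: both split into the Richardson case and the NGMRES case, use $r(q(u))=Mr(u)$ for the former, and invoke the residual-minimization property of Lemma~\ref{lem:NGMRES-mini-resi} (with the trivial choice $\beta=0$) for the latter to obtain the uniform contraction $\|r_{k}\|\le c\|r_{k-1}\|$. Your write-up just makes the computation $r(q(u))=Mr(u)$ and the feasible choice $\beta_i^{(k)}=0$ slightly more explicit than the paper does.
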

 	\begin{proof}
 		If $u_k$ is obtained from the fixed-point iteration, then $r_k=Mr_{k-1}$, and $\|r_k\|\leq c\|r_{k-1}\|$. If $u_k$ is obtained from a one-step NGMRES($m$), then from \eqref{eq:NG-minrk}, we have $\|r_k\|\leq \|r(q(u_{k-1}))\|=\|Mr_{k-1}\|\leq c \|r_{k-1}\|$. Thus, we have $\|r_k\|\leq c\|r_{k-1}\|$ for every $k$. Because $c<1$, the sequence $\{\|r_k\|\}$ converges to zero.
 	\end{proof}
 	The above result emphasizes on the convergence, but does not show by how much that aNGMRES can improve the convergence speed of the underlying fixed-point iteration. In the following, we derive convergence bounds for some cases.
 	In fact, we observe that even when $M$ is not contractive, aNGMRES may converge.

 	\begin{theorem}
 		Let $q(u)=Mu+b$. Consider  aNGMRES($m,p$) with $m< p-1$ presented in Algorithm \ref{alg:aNGMRESm}.
 		When $M$ is diagonalizable, we have the following error estimate 
 		\begin{equation*}
 			\|r_{jp}\|\leq \left(\epsilon(a,b, m+1) \|M\|^p\kappa_2(U)\right)^j \cdot \|r_0\|.
 		\end{equation*}
 		When $M$ is SPD, we have the following error estimate
 		\begin{equation}\label{eq:aNGMRES-mlesp}
 			\|r_{jp} \| \leq \left(\epsilon(a,b, m+1) \|M\|^p\right)^j\cdot \|r_0\|.
 		\end{equation}
 		
 		If $p=m+1$ and $M$ is diagonalizable, we have the following error estimate 
 		\begin{equation*}
 			\|r_{j(m+1)}\|\leq \left(\chi(m+1) \kappa_2(U)\right)^j \cdot \|r_0\|.
 		\end{equation*}
 		If $p=m+1$ and $A$ is SPD, we have the following error estimate
 		\begin{equation}\label{eq:aNGMRES-SPD}
 			\|r_{j(m+1)} \|\leq \chi(m+1)^j\|r_0\|  \leq \left(2 \left(\frac{\sqrt{\kappa}-1}{\sqrt{\kappa}+1}\right)^{m+1}\right)^j \|r_0\|,
 		\end{equation}
 		where $\kappa=\frac{\lambda_{\rm max}(A)}{\lambda_{\rm min}(A)}$.
 	\end{theorem}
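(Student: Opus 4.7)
The plan is to prove all four estimates by induction on $j$, in each case exhibiting a per-period contraction factor and then iterating it. Consider one period running from index $(j-1)p$ to index $jp$: the algorithm performs $p-1$ successive Richardson steps, yielding $r_{jp-1}=M^{p-1}r_{(j-1)p}$, and then a single NGMRES($m$) step that produces $u_{jp}$. The central observation is that, because $m\leq p-1$ in every case considered, the whole least-squares window $u_{jp-m},\ldots,u_{jp-1}$ used at step $jp$ consists of pure fixed-point iterates of the chain started at $u_{(j-1)p}$. Hence the one-step analyses of Section \ref{sec:NGMRES} apply verbatim, with $u_{(j-1)p}$ in the role of $u_0$ and $p-1$ in the role of $k$.

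For the case $m<p-1$, I would invoke Theorem \ref{thm:diag} in the diagonalizable setting, yielding
\begin{equation*}
\|r_{jp}\| \leq \epsilon(a,b,m+1)\,\kappa_2(U)\,\|M r_{jp-1}\| \leq \epsilon(a,b,m+1)\,\kappa_2(U)\,\|M\|^{p}\,\|r_{(j-1)p}\|,
\end{equation*}
where the second inequality uses $r_{jp-1}=M^{p-1}r_{(j-1)p}$. Iterating from $j$ down to $0$ gives the first bound. The SPD bound \eqref{eq:aNGMRES-mlesp} follows by the same argument but with Theorem \ref{thm:diag} replaced by \eqref{eq:rk-estimate-spd}, in which $\kappa_2(U)=1$.

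For the case $p=m+1$, the NGMRES window at step $jp$ uses exactly the $m+1=p$ iterates $u_{(j-1)p},u_{(j-1)p+1},\ldots,u_{jp-1}$, which is precisely the $m=k$ configuration of Theorem \ref{thm:diag-m=k}. Applied with $u_{(j-1)p}$ as the initial point, it gives $\|r_{jp}\|\leq \chi(m+1)\,\kappa_2(U)\,\|r_{(j-1)p}\|$, and iteration produces the third estimate. When $A$ is SPD, $M=I-A$ is symmetric and hence orthogonally diagonalizable with $\kappa_2(U)=1$, so the middle inequality of \eqref{eq:aNGMRES-SPD} is immediate; alternatively, \eqref{eq:error-sys-m=k} can be applied directly. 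The final inequality is the classical Chebyshev estimate: the substitution $\tilde q(s)=q(1-s)$ turns $q(1)=1$ into $\tilde q(0)=1$ and reparametrizes $\Sigma(M)=1-\Sigma(A)\subset[1-\lambda_{\max}(A),1-\lambda_{\min}(A)]$ into $\Sigma(A)\subset[\lambda_{\min}(A),\lambda_{\max}(A)]$, whence $\chi(m+1)\leq \min_{\tilde q\in P_{m+1},\,\tilde q(0)=1}\max_{s\in[\lambda_{\min}(A),\lambda_{\max}(A)]}|\tilde q(s)|\leq 2\bigl((\sqrt\kappa-1)/(\sqrt\kappa+1)\bigr)^{m+1}$.

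The principal obstacle is not analytic but combinatorial: one must verify that the hypothesis $m\leq p-1$ truly implies that the NGMRES window at every update step $jp$ sits inside a single contiguous block of Richardson iterates anchored at $u_{(j-1)p}$, so that the polynomial-in-$M$ representation that underpins Theorem \ref{thm:diag} and Theorem \ref{thm:diag-m=k} remains valid period by period. Once this indexing check is made, everything else is a clean composition of the single-period estimate with itself.
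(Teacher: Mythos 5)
Your proposal is correct and follows essentially the same route as the paper, which simply composes the one-step results (Theorem \ref{thm:diag}, estimate \eqref{eq:rk-estimate-spd}, Theorem \ref{thm:diag-m=k}, and \eqref{eq:error-sys-m=k}) period by period and cites the classical Chebyshev bound for the last inequality. Your explicit verification that $m\leq p-1$ keeps each NGMRES window inside a single Richardson chain anchored at $u_{(j-1)p}$, together with $r_{jp-1}=M^{p-1}r_{(j-1)p}$, is exactly the (omitted) detail underlying the paper's one-line proof.
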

 	\begin{proof}
 		These results can be derived directly from \eqref{eq:error-est-M- diagonalizable}, \eqref{eq:error-diag-m=k} and \eqref{eq:error-sys-m=k}. The second inequality in \eqref{eq:aNGMRES-SPD} can be derived easily by following Section 6.11.3 in the work of Saad \cite{saad2003iterative}.   Therefore, we omit it.
 	\end{proof}
 	
 	When $M$ is diagonalizable and $m<p-1$, if $\epsilon(a,b, m+1) \|M\|^p\kappa_2(U)<1$, then the subsequence $\left\{r_{jp}\right\}$ of $\left\{r_k\right\}$ converges to zero. When $M$ is SPD, it is possible that the subsequence of aNGMRES can converge provided that $\epsilon(a,b, m+1) \|M\|^p<1$. If $\|M\|<1$, then either $\big(\epsilon(a,b, m+1)\kappa_2(U)\big)^j$ or $\big(\epsilon(a,b, m+1)\big)^j$ is the gain obtained by aNGMRES at $jp$ iteration compared to the fixed-point iteration.
 	
 	When $M$ is diagonalizable and $p=m+1$, if $\chi(m+1)\kappa_2(U)<1$, then the subsequence $\left\{r_{j(m+1)}\right\}$ of $\left\{r_k\right\}$ converges to zero. When $A$ is SPD, we can choose $m$ sufficiently large such that 
 	$2 \left(\frac{\sqrt{\kappa}-1}{\sqrt{\kappa}+1}\right)^{m+1}<1$, then the subsequence $\left\{r_{j(m+1)}\right\}$ of $\left\{r_k\right\}$ converges to zero. The above theorem suggests that both the condition number of $A$ and the window size $m$ used in aNGMRES influence the rate of convergence of aNGMRES. In particular, a smaller condition number $\kappa$ is associated with improved algorithmic performance.
 	
 		\begin{remark}
 			As previously noted, aNGMRES can be applied to general fixed-point iterations. In this work, however, we restrict our convergence analysis to the specific form $q(x)=x+(b-Ax)$. It would be interesting to extend this analysis to the more general case 
 			$q(x)=x+P(b-Ax)$,  where $P$ serves as a preconditioner---leading to a preconditioned version of aNGMRES. A natural question that arises is the relationship between preconditioned aNGMRES and preconditioned GMRES algorithm, including its restarted variants. However, this investigation is beyond the scope of the present work.
 		\end{remark}
 	
 	
 	\section{Numerical results}\label{sec:num}
 	
 In this section, we begin by discussing practical aspects of using aNGMRES, including the selection of the window size $m$, possible strategies for solving the associated least-squares problems, and the choices of norms used in those problems. We then present some examples to confirm our theoretical findings and further explore the convergence behavior of aNGMRES in cases where GMRES stagnates. In our numerical test, we use MATLAB's backslash operator as the solver for the least-squares problems. Finally, we demonstrate the effectiveness of aNGMRES in solving a challenging nonlinear PDE boundary-value problem. A comparison with Anderson acceleration is provided.
 	
 		\subsection{Practical considerations}
 		In practice, the choice of $m$ in Algorithm \ref{alg:NGMRESm} is often problem-dependent; however, it should remain modest in size---generally within a few tens, say between 1 and 20---to ensure efficiency and stability. A large value of $m$ increases the memory requirements for solving the associated least-squares problem (LSP) and may lead to ill-conditioned systems. We note that alternative norms, such as the $\ell^1$ or $\ell^{\infty}$ norms, can be used in the optimization problem \eqref{eq:NGMRES-LSQ}. In this work, we focus on the $\ell^{2}$ norm, under which the minimization problem can be reformulated as a linear least-squares problem and solved efficiently. A thorough examination of the parameter $m$ and the impact of different norm choices is reserved for future research.
 		
 		Choosing an appropriate solution method for the LSP, \eqref{eq:NGMRES-LSQ}, is a key consideration in implementing   NGMRES algorithm. Although NGMRES was originally proposed in 1997, to the best of our knowledge, its practical implementation has not been thoroughly discussed in the literature. As the iterates $u_k$ approach the exact solution $u^*$, the matrix $\mathcal{D}_k$ in \eqref{eq:Dk} may become ill-conditioned. In this case, solving the normal equations corresponding to \eqref{eq:Dk} or using \eqref{eq:Dk-normeq} is undesirable, as this approach squares the condition number and can lead to numerical instability. 
 		
 		Notably, Anderson acceleration (AA) \cite{anderson1965iterative}  also involves solving an LSP and the issue of ill-conditioning has been extensively addressed in that context. For example, Fang and Saad \cite{fang2009two} suggest using rank-revealing QR decomposition or a truncated singular value decomposition. Another strategy involves QR factor-updating techniques combined with adaptive control of the window size parameter $m$ to drop columns if the coefficient matrix of the LSP becomes too ill-conditioned. Moreover, recent work has proposed new approaches to improve the efficiency of AA. For instance, Lockhart et al. \cite{lockhart2022performance} introduced three low-synchronization orthogonalization algorithms within AA, which reduce the number of global reductions per iteration to a constant of 2 or 3, independent of the parameter $m$.  Parallel implementations of AA have been investigated \cite{loffeld2016considerations}. These techniques appear well-suited for adaptation within the NGMRES framework and may facilitate more robust and efficient implementations, which we intend to explore in future research.

 	\subsection{Validation}\label{sub:validation}
 	We validate Theorems \ref{thm:aNGMRES=rGMRES} and \ref{aNGMRES-GMRES} through the following examples.
 	\begin{example}\label{ex:cirex}
 		Consider $A\in \mathbb{R}^{n\times n}$, where the -1th diagonal elements are one, the last element of the first row is one, and other elements are zero, and $Au=b$, where
 		\begin{equation}\label{eq:cirmatrix}
 			A = 
 			\begin{bmatrix}
 				0 & 0 & 0 & \cdots & 0 & 1 \\
 				1 & 0 & 0 & \cdots & 0 & 0 \\
 				0 & 1 & 0 & \cdots & 0 & 0 \\
 				\vdots & \vdots & \vdots & \ddots & \vdots & \vdots \\
 				0 & 0 & 0 & \cdots & 1 & 0
 			\end{bmatrix}, \quad 
 			b=\begin{bmatrix}
 				1\\0\\0\\\vdots\\0
 			\end{bmatrix}.
 		\end{equation}
 	\end{example}
 	
 	The matrix $A$ is invertible and the exact solution is $u^*=[0, 0, 0, \cdots, 0, 1]^T$.
 	Consider the initial guess $u_0$ to be an $n$-dimensional column vector consisting entirely of ones. It can be shown that $\nu(A,r_0)=n$.  We consider $n=36$ in this example. Note that in this example $\|M\|=\|I-A\|>1$ and is not invertible. This means that the fixed-point iteration does not converge. 
 	
 	To confirm Theorem \ref{thm:aNGMRES=rGMRES}, we take $m=3$ and $p=m+1=4$. We report the convergence history of aNGMRES(3,4) and GMRES(4). From Theorem \ref{thm:aNGMRES=rGMRES}, we know that the $(4j)$th iterate of the two methods is the same, which is confirmed in Figure \ref{fig:aNG=rGMRES}.
 	\begin{figure}[h]
 		\centering
 		\includegraphics[width=8.5cm]{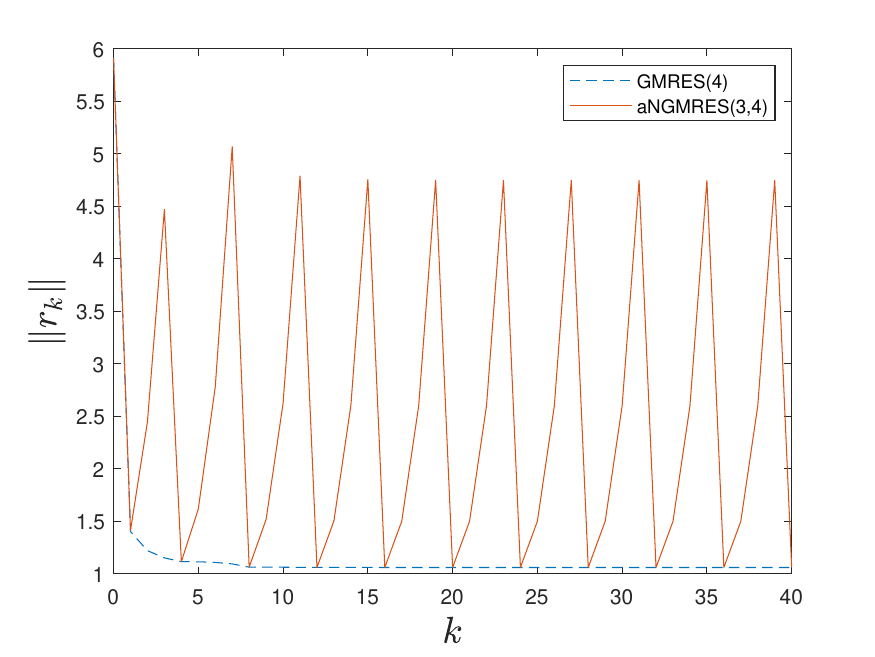}
 		\caption{Residual norm convergence history of  aNGMRES(3,4) vs. GMRES(4) for Example \ref{ex:cirex}.}\label{fig:aNG=rGMRES}
 	\end{figure}
 	
 	To validate Theorem \ref{aNGMRES-GMRES}, we consider the same example with $m=\infty$ and $p=4, 5$. Since GMRES does not stagnate, according to Theorem \ref{aNGMRES-GMRES}, aNGMRES($\infty,p$) always converges to the exact solution in finite iterations. The left of Figure \ref{fig:aNG=GMRES} shows that the iterates of aNGMRES($\infty$,4) and GMRES are coincided at the $(4j)$th iteration. Moreover, since the stagnation does not occur, GMRES and aNGMRES($\infty$,4) converge to the exact solution at the 36th iteration. For the right of Figure \ref{fig:aNG=GMRES}, we consider $m=\infty$ and $p=5$. Note that $36/5$ is not an integer.  According to Theorem \ref{aNGMRES-GMRES}, aNGMRES($\infty$,5) will converge to the exact solution at $40$th iteration, which is confirmed by our numerical result.
 	
 	\begin{figure}[h]
 		\centering
 		\includegraphics[width=7.5cm]{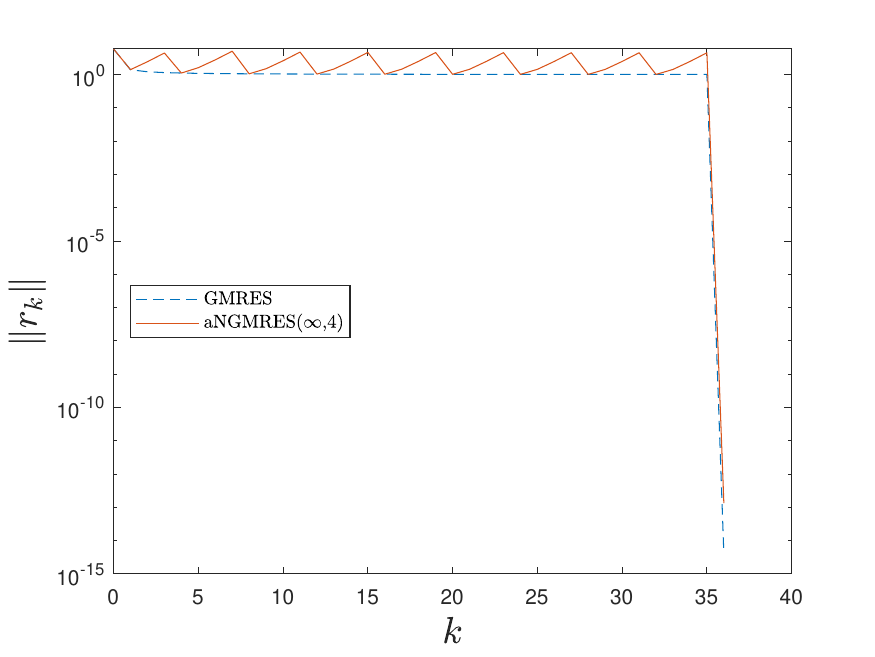}
 		\includegraphics[width=7.5cm]{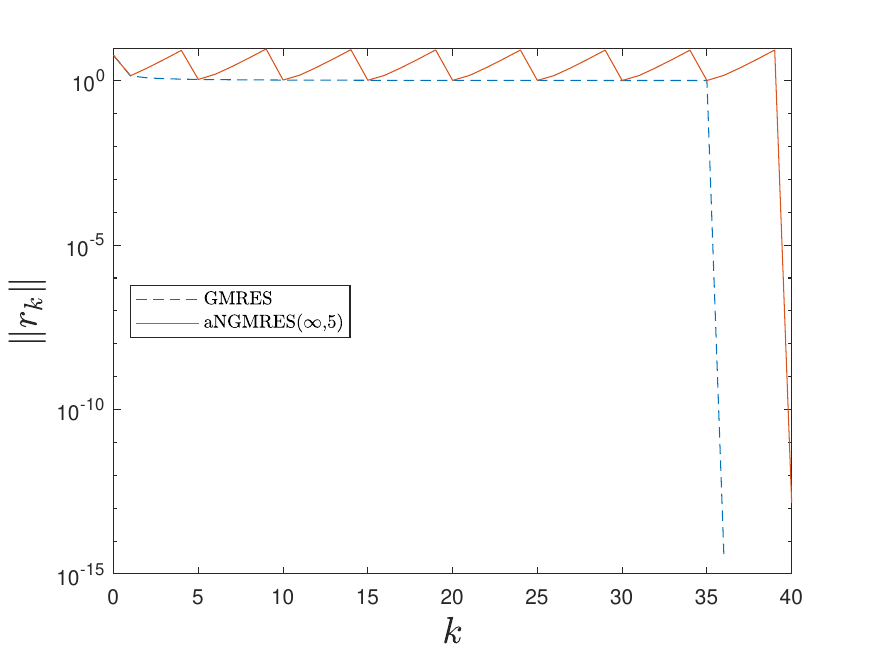}
 		\caption{Residual norm convergence history for Example \ref{ex:cirex}. Left: aNGMRES($\infty$,4) vs. GMRES. Right: aNGMRES($\infty$,5) vs. GMRES.}\label{fig:aNG=GMRES}
 	\end{figure}

 	When GMRES stagnation occurs, the convergence behavior of aNGMRES($\infty$,4) gets complicated. We use the following example to illustrate this. 
 	\begin{example}\label{ex:blockCirex}
 		Consider $\mathcal{A}x=b$ \cite{lupo2019convergence}, where $\mathcal{A}$ is block diagonal with $q$ blocks. Each block has the same structure as $A$ defined in \eqref{eq:cirmatrix}. The $k$th block has dimension $\ell k$, where $\ell \in \mathbb{N}$.
 		
 		Let us consider $\ell=3$ and $q=5$. Then, $\mathcal{A} \in \mathbb{R}^{45\times 45} (3+6+9+12+15=45)$. Note that $\mathcal{A}$ is invertible. We consider a zero initial guess, and construct $b$ by setting to one all the entries whose index coincides with the row index of matrix $\mathcal{A}$ where a new block starts, i.e., the nonzero components of vector $b$ are at indexes of 1, 4, 10, 19, 31.  $\nu(A,r_0)=30$. In this example, the full GMRES will stagnate for $\ell$ consecutive iterations. 
 	\end{example}	
 	In Figure \ref{fig:aNGs-mp}, we consider the performance of aNGMRES(2,3) and GMRES(3).  We see that GMRES(3) stagnates for three consecutive iterations, and $\|r_{3j}\|_2$ of aNGMRES(2,3) is the same as $\|r_{3j}^{G(3)}\|_2$ (i.e., $x_{3j}=x_{3j}^{G(3)}$).

 	\begin{figure}[h]
 		\centering
 		\includegraphics[width=8.5cm]{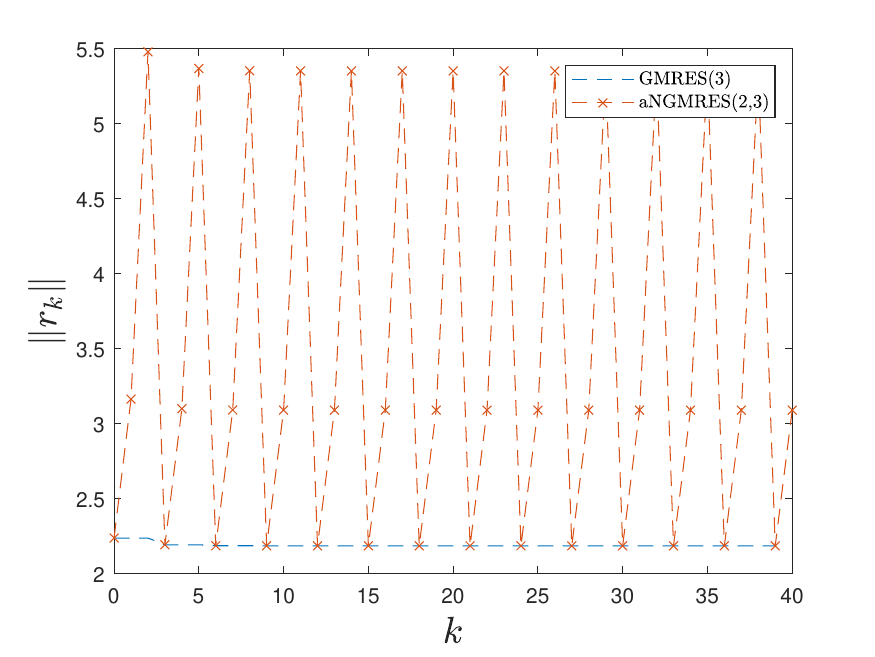}
 		\caption{Residual norm convergence history for Example \ref{ex:blockCirex}: aNGMES(2,3) vs. GMRES(3).}\label{fig:aNGs-mp}
 	\end{figure}
 	
 	In Figure \ref{fig:aNG-inf-p1}, we consider the performance of full GMRES and aNGMRES($\infty$,$p$) with $p=1, 2, 3,4$. Note that aNGMRES($\infty$,1) is NGMRES($\infty$). It can be seen that GMRES stagnates for three consecutive iterations, while aNGMRES($\infty$,1) stagnates forever, i.e., $u_k=u_0$ for all $k$. For  aNGMRES($\infty$,2), we see the norm of the residual sequence forms an alternating periodic sequence with a period of 2. However, we see that $\|r_{3j}\|$ of aNGMRES($\infty$,3) is the same as $\|r_{3j}^G\|$ (i.e., $x_{3j}=x_{3j}^G$), and aNGMRES($\infty$,3) converges to the exact solution at the 30th iteration, which is true for GMRES.
 	Moreover, aNGMRES($\infty$, 4) converges at the 40th iteration.
 	
 	From this example, we see that when GMRES stagnation occurs, the performance of aNGMRES($\infty$,$p$) becomes complicated. For NGMRES($\infty$) (i.e., aNGMRES($\infty$,1)), it can stagnate forever, but aNGMRES($\infty$,$p$) with proper choice of $p$ can converge to the exact solution within finite iterations. This enhances our understanding of NGMRES method.
 	
 	\begin{figure}[h]
 		\centering
 		\includegraphics[width=7.5cm]{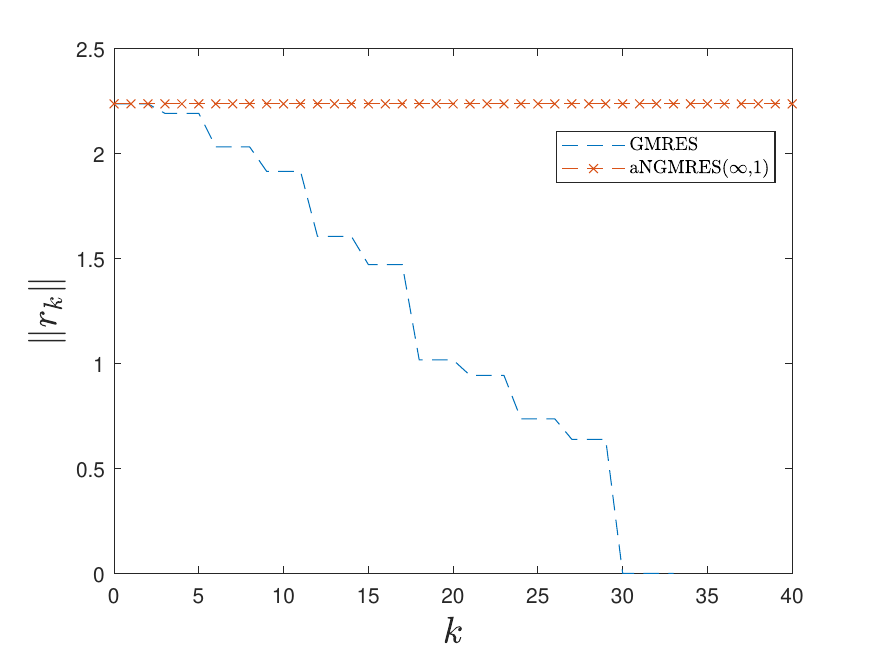}
 		\includegraphics[width=7.5cm]{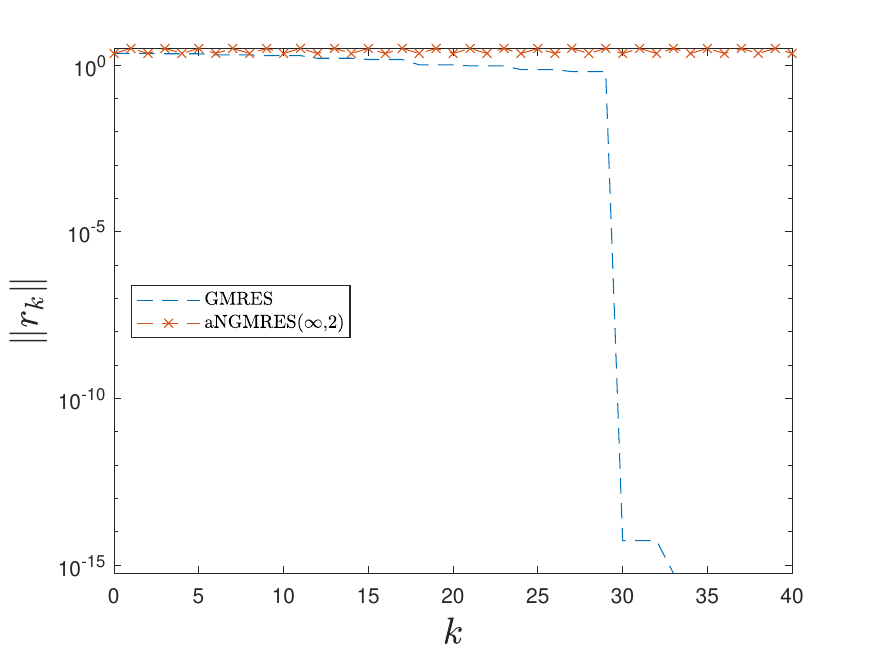}
 		\includegraphics[width=7.5cm]{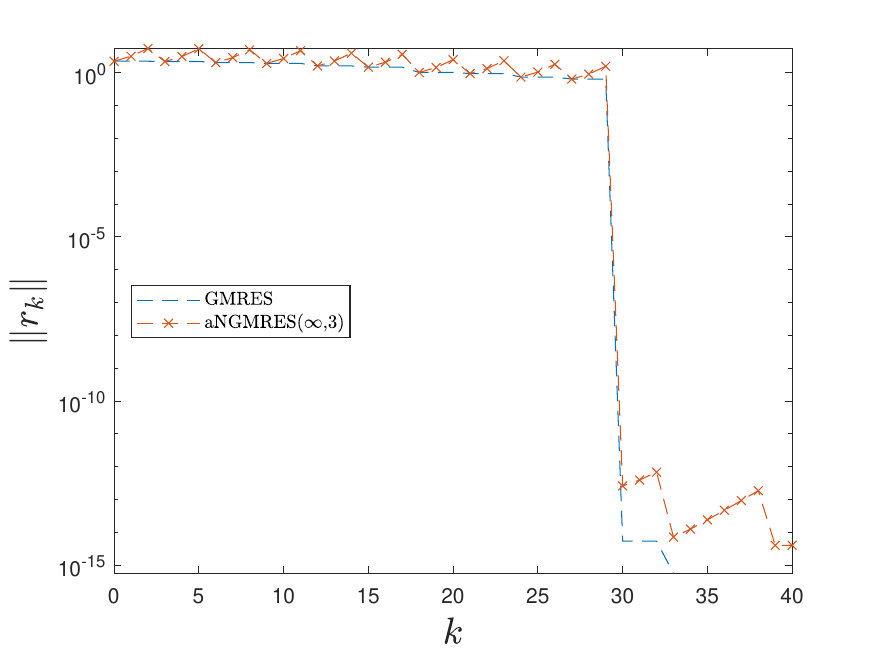}
 		\includegraphics[width=7.5cm]{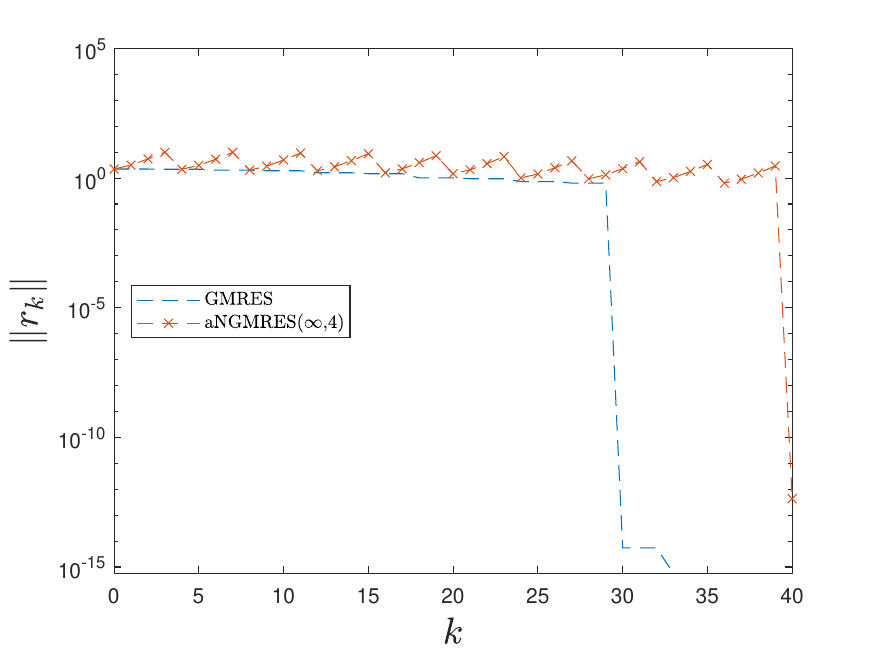}
 		\caption{Residual norm convergence history for Example \ref{ex:blockCirex}:  aNGMRES($\infty$,$p$) with $p=1, 2, 3,4$ vs. GMRES.}\label{fig:aNG-inf-p1}
 	\end{figure}
 	
 	\begin{example}\label{ex:Lap}
 		We consider a 2D Laplacian with finite difference scheme and construct the coefficient matrix by using the MATLAB command $A=delsq(numgrid('S',n+2))$ with $n=64$. We take $Au=b$ with $b$ all the ones and a random initial guess. 
 	\end{example}
 	In this example, the fixed-point iteration $q(u_{k+1})=Mu_k+b$ with $M=I-A$ does not converge.  To validate Theorem \ref{aNGMRES-GMRES}, we compare  aNGMRES($\infty$,$3$) with GMRES in Figure \ref{fig:aNG-Lap}. We see that the subsequence $\{u_{3j}\}$ of the aNGMRES($\infty$,$3$) method coincides with the sequence $\{u_{3j}^G\}$ of GMRES. In general, aNGMRES($\infty$,$p$) can be used as an alternative to GMRES.
 	
 	\begin{figure}[h]
 		\centering
 		\includegraphics[width=8cm]{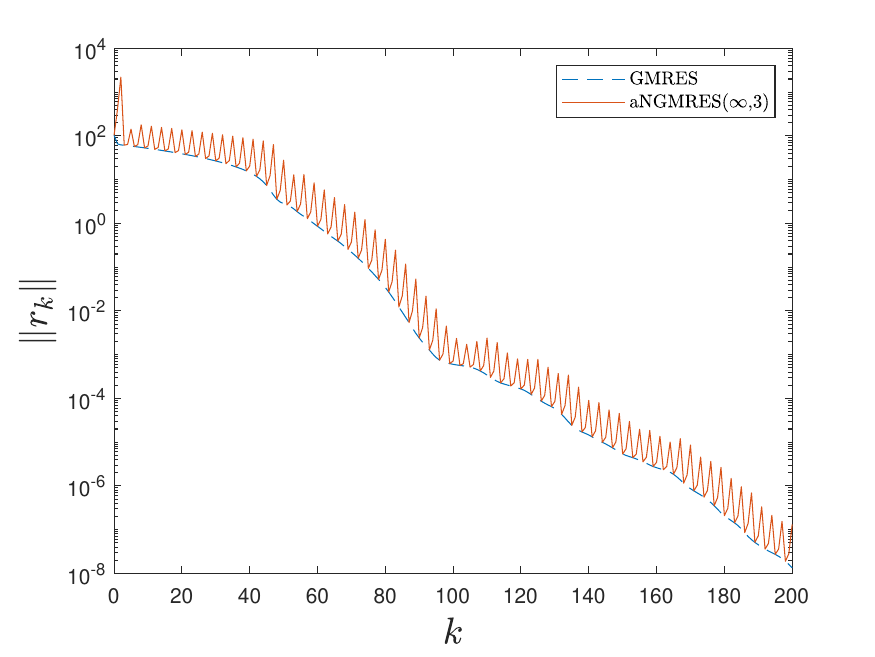}
 		\caption{Residual norm convergence history for Example \ref{ex:Lap}:  aNGMRES($\infty$,$3$) vs. GMRES.}\label{fig:aNG-Lap}
 	\end{figure}

 	\subsection{Applications}
 		Although this work primarily emphasizes the theoretical and practical aspects of aNGMRES in conjunction with GMRES, we now present a numerical study demonstrating the application of aNGMRES to a nonlinear PDE boundary-value problem.
 		
 		\begin{example}\label{ex:Bratu}
 			We consider the following 2D Bratu problem \cite{walker2011anderson} 
 			\begin{align*}
 				\Delta u + \lambda e^u =0, &  \quad \text{in}\quad \Omega=(0,1)\times (0,1),\\
 				u =0, & \quad \text{on}\quad \partial \Omega.
 			\end{align*}
 		\end{example}
 		In this experiment, we used a centered-difference discretization on an $N\times N$ grid ($h=1/(N+1)$). This results in a system of the form
 		\begin{equation}
 			Au=\lambda e^u:=f(u),
 		\end{equation}
 		where $A$ is the discrete  negative Laplacian matrix. We define $g(u)= Au-f(u)$, and look for solution of $g(u)=0$.
 		The fixed-point iteration that we consider is  
 		\begin{equation}\label{eq:Picard}
 			q(u_{k})=u_k-P^{-1}(Au_k-f(u_k)).
 		\end{equation}
 		where $P=I$ or $P={\rm diag}(A)$.  We take $\lambda=6$ in the Bratu problem and use the zero initial approximation.
 		
 		Our experiments were conducted in MATLAB. Since MATLAB timings may not accurately reflect performance in other computational environments and are not fully optimized, we have chosen to focus on reporting iteration counts rather than timing. Implementing the alternating NGMRES method in more advanced environments, such as C or Python, and comparing its performance with other approaches represents a valuable direction for future research. We perform a number of numerical tests by varying the window size $m$, periodicity $p$, and mesh size $h$. The stopping criterion is $\|r_k\|\leq 10^{-8}$.
 		
 		When $P=I$, the fixed-point iteration defined by \eqref{eq:Picard} corresponds to the classical Picard iteration and it diverges in this case. In our experiments, both AA and aNGMRES also exhibit a divergent behavior under this setting. Interestingly, NGMRES($m$) converges for certain values of $m$, such as 2, 6, 10, 20. In Figure \ref{fig:Bratu-NG}, we report the performance of NGMRES(2), NGMRES(10), NGMRES(15), and NGMRES(20). Among these,  NGMRES(15) demonstrates the best performance, followed by  NGMRES(10) and NGMRES(2).  Surprisingly, NGMRES(20) performs nearly the same as NGMRES(2). It may suggest that excessively large window sizes $m$ may not yield additional benefits and could even hinder convergence efficiency.  
 		
 		\begin{figure}[h]
 			\centering
 			\includegraphics[width=8cm]{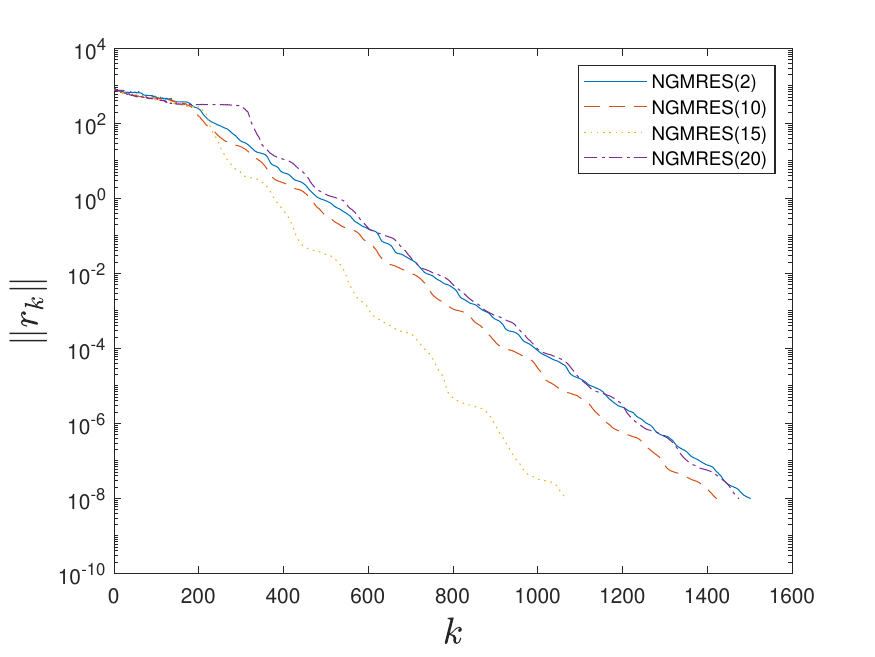}
 			\caption{Residual norm convergence history for Example \ref{ex:Bratu} for NGMRES($m$) with $h=1/129$ and $P=I$, and $m=2, 10, 15, 20$.}\label{fig:Bratu-NG}
 		\end{figure}
 		
 		When $P={\rm diag}(A)$, the fixed-point iteration defined by \eqref{eq:Picard} exhibits very slow convergence. To investigate this behavior, we focus on this iteration and perform a series of tests using AA($m$), NGMRES($m$), aAA($m,p$) and aNGMRES($m,p$), systematically varying the parameters $m$ and $p$. 
 		
 		In Figure \ref{fig:Bratu-NG-AAmp22-52}, we report the residual norm convergence history of AA($m$), NGMRES($m$), aAA($m,p$) and aNGMRES($m,p$) for $(m,p)=(2,2)$ and $(m,p)=(5,2)$. The left panel of Figure \ref{fig:Bratu-NG-AAmp22-52} shows that NGMRES(2) and aNGMRES(2,2) outperform both AA(2) and aAA(2,2), and AA and aAA stagnate. Among them, NGMRES(2) requires fewer iterations than aNGMRES(2,2) to reach the stopping criterion. 
 		
 		In contrast,  the right pane of Figure \ref{fig:Bratu-NG-AAmp22-52}, corresponding to $(m,p)=(5,2)$, shows that NGMRES(5) and aNGMRES(5,2) exhibit nearly identical convergence behavior, both outperforming AA(5) and aAA(5,2). In this case, we expect aNGMRES to achieve a lower runtime than NGMRES. Additionally, we see aAA(5,2) converges much faster than AA(2).
 		
 		In Figure \ref{fig:Bratu-NG-AAmp155}, we consider $(m,p)=(15,2)$ and $(m,p=(15,5)$. From the left panel of Figure \ref{fig:Bratu-NG-AAmp155}, we observe that  NGMRES(15) and aNGMRES(15,2) exhibit a similar convergence behavior and outperform both AA(15) and aAA(15,2). In the right panel of Figure \ref{fig:Bratu-NG-AAmp155},  NGMRES(15), aNGMRES(15,5) and aAA(15,5) show comparable convergence histories, all outperforming AA(15).
 		
 		In Figure \ref{fig:Bratu-NG-AAmp20}, we examine the performance of the methods with parameter settings $(m,p)=(20,2)$ and $(m,p=(20,5)$. Notably,  AA(20) performs worse than AA(10), as shown in Figure \ref{fig:Bratu-NG-AAmp155}. GMRES(20), aNGMRES(20,2), and aAA(20,2) exhibit similar performance.  However, aAA(20,5) shows a slight improvement over both GMRES(20), aNGMRES(20,5).   NGMRES(20) and NGMRES(15) demonstrate comparable performance, as illustrated in Figure \ref{fig:Bratu-NG-AAmp155}.
 		
 		\begin{figure}[h]
 			\centering
 			\includegraphics[width=7.5cm]{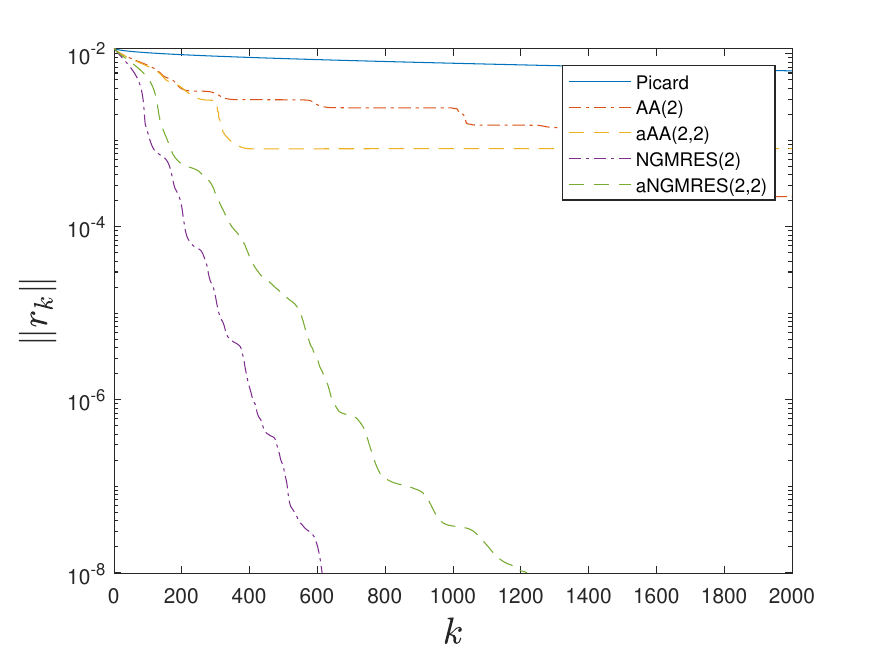}
 			\includegraphics[width=7.5cm]{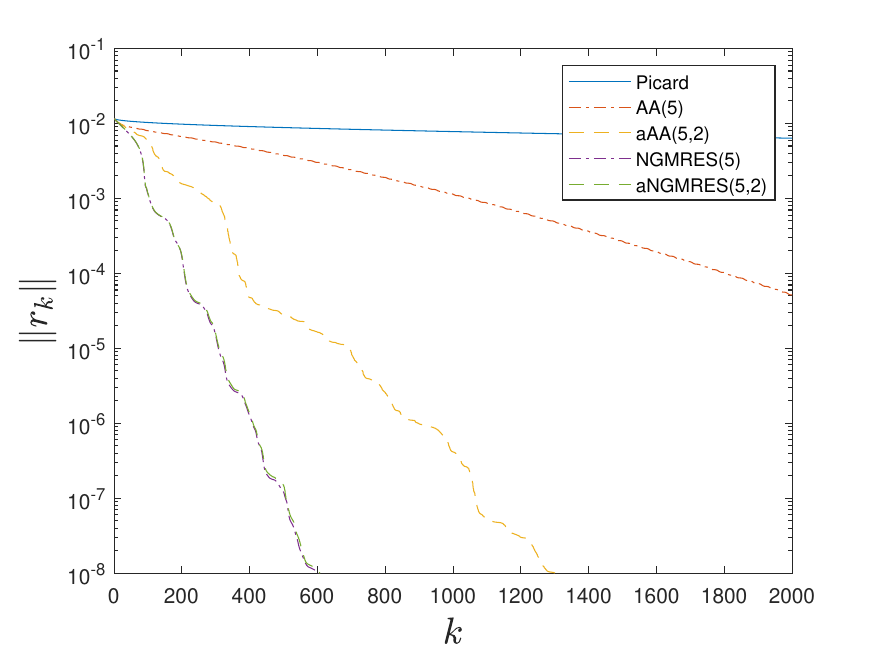}
 			\caption{Residual norm convergence history for Example \ref{ex:Bratu} for AA($m$), NGMRES($m$), aAA($m,p$) and aNGMRES($m,p$) with $h=1/129$ and $P={\rm diag}(A)$. Left: $(m,p)=(2,2)$. Right: $(m,p)=(5,2)$. }\label{fig:Bratu-NG-AAmp22-52}
 		\end{figure}
 		
 		\begin{figure}[h]
 			\centering
 			\includegraphics[width=7.5cm]{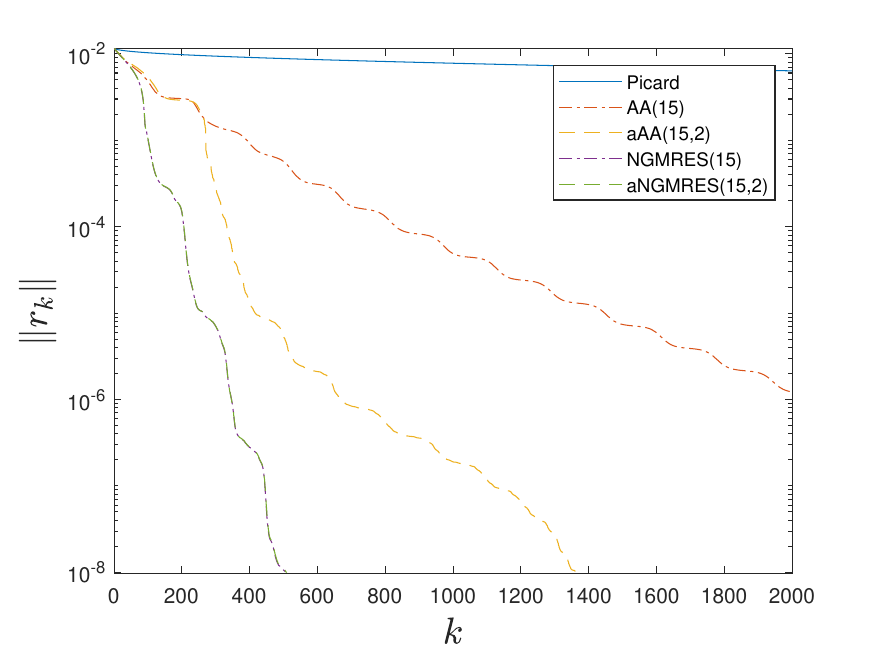}
 			\includegraphics[width=7.5cm]{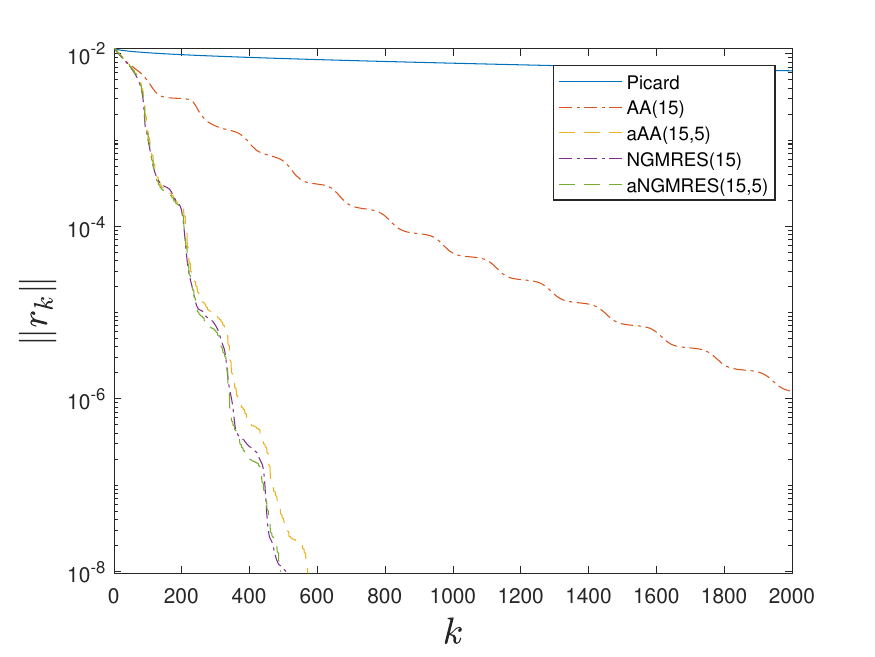}
 			\caption{Residual norm convergence history for Example \ref{ex:Bratu} for AA($m$), NGMRES($m$), aAA($m,p$) and aNGMRES($m,p$) with $h=1/129$ and $P={\rm diag}(A)$. Left: $(m,p)=(15,2)$. Right: $(m,p)=(15,5)$. }\label{fig:Bratu-NG-AAmp155}
 		\end{figure}

 		\begin{figure}[h]
 			\centering
 			\includegraphics[width=7.5cm]{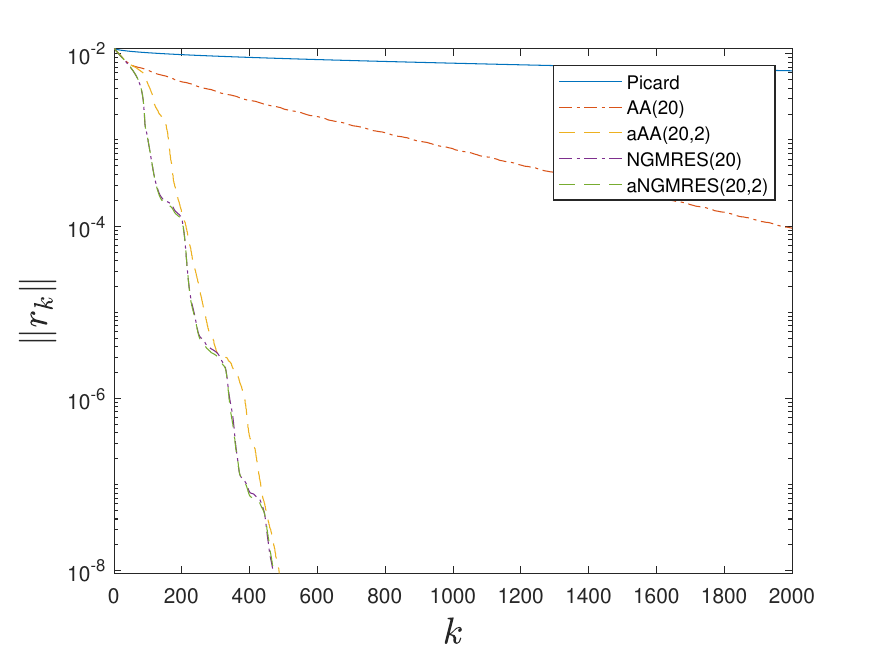}
 			\includegraphics[width=7.5cm]{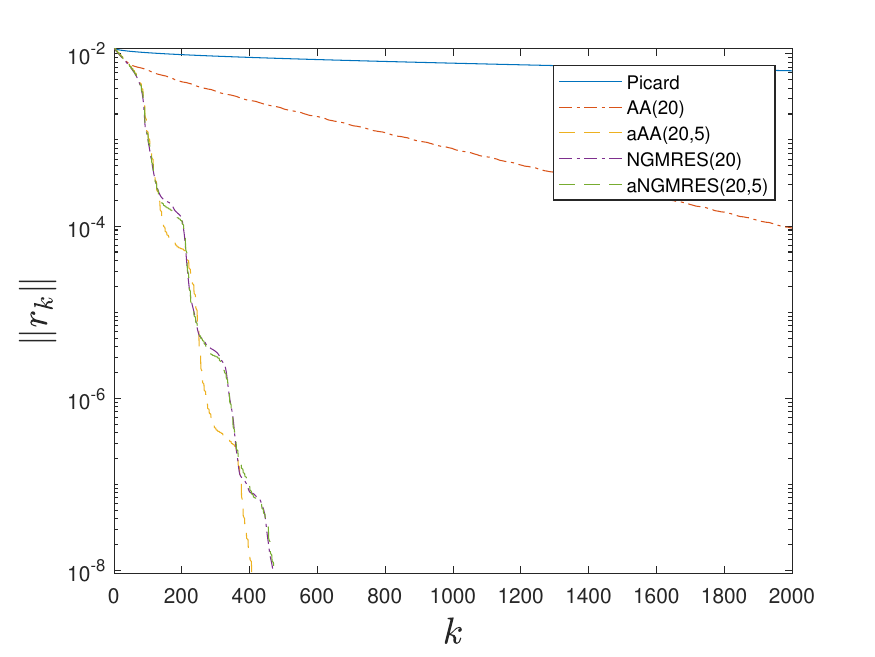}
 			\caption{Residual norm convergence history for Example \ref{ex:Bratu} for AA($m$), NGMRES($m$), aAA($m,p$) and aNGMRES($m,p$) with $h=1/129$ and $P={\rm diag}(A)$. Left: $(m,p)=(20,2)$. Right: $(m,p)=(20,5)$. }\label{fig:Bratu-NG-AAmp20}
 		\end{figure}

 		In Figure \ref{fig:Bratu-NG-AA-h}, we examine the performance of aAA(20,2) and aNGMRES(20,2) by varying the mesh size $h=1/(N+1)$. The tests are labeled as aAA-N and aNGMRES-N, corresponding to $N=32, 64, 128, 256$.  The number of iterations required to meet the stopping criterion for aAA($20,2$) are 114, 261, 486, and 1268, while for aNGMRES($20,2$) they are 80, 196, 470, and 1074, respectively.  The computational cost per iteration does not make a big difference between the two methods.  We also  report the iteration counts for aAA($20,5$) and aNGMRES($20,5$). The iteration counts for aAA($20,5$) are 70, 175, 405, and 1135, and for aNGMRES($20,5$) are 75, 195, 475, 1085.

 		\begin{figure}[h]
 			\centering
 			\includegraphics[width=7.5cm]{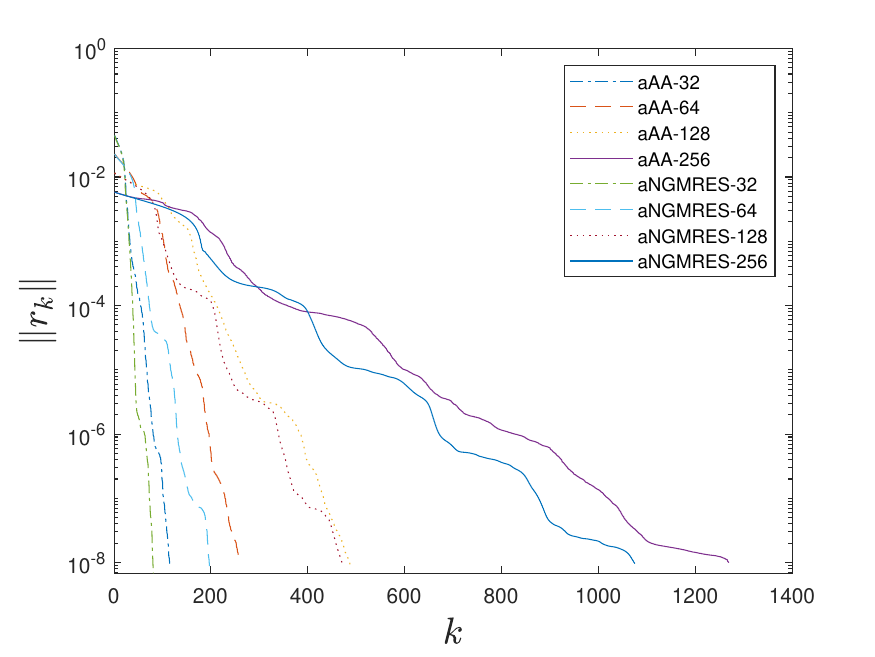}
 			\includegraphics[width=7.5cm]{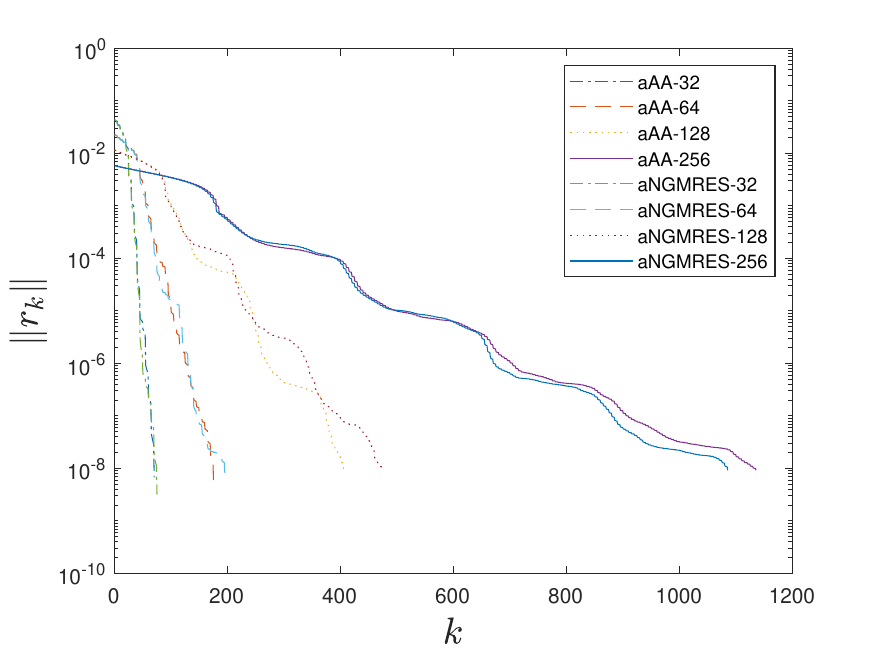}
 			\caption{Residual norm convergence history for Example \ref{ex:Bratu} for aAA and aNGMRES  with $P={\rm diag}(A)$, varying $N$. Left: aAA($20,2$) and aNGMRES($20,2$). Right: aAA($20,5$) and aNGMRES($20,5$).  }\label{fig:Bratu-NG-AA-h}
 		\end{figure}
 		
 		In summary, this example demonstrates that NGMRES can converge even for noncontractive fixed-point iterations. The adaptive Anderson aAA significantly outperforms standard AA. With appropriate choices of $m$ and $p$, NGMRES($m$), aAA($m,p$), NGMRES($m$), and aNGMRES($m,p)$ exhibit comparable performance. However, for large values of $m$, both AA($m$) and NGMRES($m$) ($m\geq 15$) show limited improvement.
 		
 		This is a preliminary case study. In future work, we will explore the application of aNGMRES to more complex systems, particularly nonlinear problems that pose significant computational challenges. A detailed comparison with Anderson Acceleration   will be conducted to assess its performance relative to NGMRES or aNGMRES.

 	\section{Conclusion}\label{sec:con}
 	In this work, we analyze one-step NGMRES($m$) applied to linear systems, where the $m$ iterates are obtained by Richardson iteration, then one iteration of NGMRES($m$) is applied to these iterates. Based on this, we propose alternating NGMRES with depth $m$ and periodicity $p$, denoted as aNGMRES($m,p$). The proposed method avoids solving the least-squares problem at each iteration, which can save computational work. We establish the relationship between aNGMRES($m,p$), full GMRES, and restarted GMRES. aNGMRES($\infty,p$) can be regarded as an alternative to GMRES for solving linear systems. For finite $m$, the iterates of aNGMRES($m,m+1$) and restarted GMRES (GMRES($m+1$)) are identical at the iteration index $jp$.  These findings increase our understanding of NGMRES. Finally, we present a convergence analysis of aNGMRES applied to linear systems, using Richardson iteration as the underlying fixed-point scheme. Numerical experiments validate our theoretical findings.
 	
 	Although the fixed-point iteration considered in this work is the Richardson iteration, the proposal aNGMRES can be applied to other fixed-point iterations, including nonlinear ones. In the future, we will explore the application of aNGMRES to more challenging linear and nonlinear systems, conduct convergence analysis for nonlinear case, and develop efficient solvers for the least-squares problems involved in aNGMRES.

\bibliographystyle{plain}
\bibliography{aNGMRESBib}
\end{document}